\let\liang\relax\fi
\def\TeXHOME{.}
\let\trace\relax
\DeclareMathOperator{\trace}{tr}
\def\bbC{\mathbb{C}}
\def\bbR{\mathbb{R}}
\newtheorem{theorem}{Theorem}[section]
\newtheorem{lemma}{Lemma}[section]
\newtheorem{corollary}{Corollary}[section]
\theoremstyle{definition}
\numberwithin{equation}{section}
\numberwithin{figure}{section}
\numberwithin{table}{section}
\def\sss{\scriptstyle}
\title{On Generalizing Trace Minimization}
\author{
Xin Liang%
\thanks{Yau Mathematical Sciences Center, Tsinghua University, Beijing 100084, China.
	Email: {\tt liangxinslm@tsinghua.edu.cn}.
	Supported in part by the National Natural Science Foundation of China
	NSFC-11901340.  }
\and
Li Wang%
\thanks{Department of Mathematics,
University of Texas at Arlington, Arlington, TX 76019-0408, USA. Supported in part by NSF DMS-2009689. Email: {\tt li.wang@uta.edu.}}
\and
Lei-Hong Zhang%
\thanks{School of Mathematical Sciences and Institute of Computational Science, Soochow University, Suzhou 215006, Jiangsu, China. Email: {\tt longzlh@suda.edu.cn}.
             Supported in part by the National Natural Science Foundation of China
             NSFC-11671246 and NSFC-12071332.}
\and
Ren-Cang Li%
\thanks{Department of Mathematics,
University of Texas at Arlington, Arlington, TX 76019-0408, USA. Supported in part by NSF DMS-1719620 and DMS-2009689.
Email: {\tt rcli@uta.edu.}}
}
\date{
      January 29, 2021
}
\begin{document}

\maketitle

\begin{abstract}
Ky Fan's trace minimization principle is extended along the line of the
Brockett cost function $\trace(DX^{\HH}AX)$ in $X$ on the Stiefel manifold, where $D$ of an apt size is positive definite.
Specifically, we investigate $\inf_X\,\trace(DX^{\HH}AX)$ subject to $X^{\HH}BX=I_k$ or $J_k=\diag(\pm 1)$. We establish
conditions under which the infimum is finite and when it is finite, analytic solutions are
obtained in terms of the eigenvalues and eigenvectors of the matrix pencil $A-\lambda B$, where $B$ is possibly indefinite
and singular, and $D$ is also possibly indefinite.
\end{abstract}

\smallskip
{\bf Key words.} Ky Fan's trace minimization principle, positive semi-definite matrix pencil, eigenvalue, eigenvector,
Brockett cost function.

\smallskip
{\bf AMS subject classifications}. Primary 15A18. Secondary 15A22, 65F15.

\section{Introduction}
Quadratic optimization problems with matrix arguments are drawing tremendous attentions lately in data science, and
they often involve traces of certain quadratic forms, for example, the trace ratio maximization problem
from the linear discriminant analysis (LDA)
\cite{kocs:2011,ngbs:2010,zhln:2010,zhln:2013,zhli:2014a,zhli:2014b}, the correlation maximization
from the canonical correlation analysis (CCA) and its variants \cite{chlg:2013,cugh:2015,hors:1961,wazb:2020,zhwb:2020}.
Some formulations admit analytical solutions in terms of matrix eigenvalue/singular value decompositions, but most  don't.
Among those that do admit analytical solutions, the most well-known one is perhaps
Ky Fan's trace minimization principle
\cite[p.248]{hojo:2013}
\begin{equation}\label{eq:min-trace-Herm}
\min_{X^{\HH}X=I_k}\trace(X^{\HH}AX)=\sum_{i=1}^k\lambda_i,
\end{equation}
where $\tr(\,\cdot\,)$ is the trace of a square matrix, $I_k$ is the $k\times k$ identity matrix, and $A\in\bbC^{n\times n}$ is Hermitian and its eigenvalues are denoted by $\lambda_i$ ($i=1,2,\ldots,n$) and arranged in the ascending order:
\begin{equation}\label{eq:lambda-ascending}
	\lambda_1\le\lambda_2\le\cdots\le\lambda_n.
\end{equation}
Moreover for any minimizer
$X_{\min}$ of \eqref{eq:min-trace-Herm}, i.e., $\trace(X_{\min}^{\HH}AX_{\min})=\sum_{i=1}^k\lambda_i$, its columns span $A$'s invariant
subspace
associated with the first $k$ eigenvalues $\lambda_i$, $i=1,2,\ldots,k$.

The result \eqref{eq:min-trace-Herm} can be
straightforwardly extended to
\begin{equation}\label{eq:min-trace-Herm-generalized}
	\min_{X^{\HH}BX=I_k}\trace(X^{\HH}AX)=\sum_{i=1}^k\lambda_i,
\end{equation}
where $A,\, B\in\bbC^{n\times n}$ are Hermitian and $B$ is positive definite, and now $\lambda_i$ are the eigenvalues
of matrix pencil $A-\lambda B$ and ordered as in \eqref{eq:lambda-ascending}. Essentially, \eqref{eq:min-trace-Herm-generalized} is
no more general than \eqref{eq:min-trace-Herm}. In fact, upon substitutions: $X\leftarrow B^{1/2}X$ and
$A\leftarrow B^{-1/2}AB^{-1/2}$, \eqref{eq:min-trace-Herm-generalized} reduces to \eqref{eq:min-trace-Herm}. A nontrivial
extension of \eqref{eq:min-trace-Herm}, similar in form to \eqref{eq:min-trace-Herm-generalized}, is
\cite{kove:1995,lilb:2013}
\begin{equation}\label{eq:min-trace-Herm-SDP}
				\inf_{X^{\HH}BX=J_k}\trace(X^{\HH}AX)=\sum_{i=1}^{k_+}\lambda_i^+-\sum_{i=1}^{k_-}\lambda_i^-
\end{equation}
for a positive semi-definite matrix
pencil\footnote {$A,\, B\in\bbC^{n\times n}$ are Hermitian and there exists $\lambda_0\in{\mathbb R}$
     such that $A-\lambda_0B$ is positive semi-definite \cite[Definition 1.1]{lilb:2013}.}
$A-\lambda B$, where  $\lambda_i^{\pm}$ are finite eigenvalues
of $A-\lambda B$ and are arranged in the order as
$$ 
	\lambda_{n_-}^-\le\cdots\le\lambda_1^-\le\lambda_1^+\le\cdots\le\lambda_{n_+}^+,
$$ 
$n_-$ and $n_+$ are the numbers of negative and positive eigenvalues of $B$, respectively, and
\begin{equation}\label{eq:Jk-dfn}
0\le k_{\pm}\le n_{\pm},\,\,
k=k_++k_-,\,\,
	J_k=\begin{bmatrix}
		I_{k_+} &  \\
		& -I_{k_-}
	\end{bmatrix}\in{\mathbb C}^{k\times k}.
\end{equation}

Recently, Liu, So, and Wu~\cite{lisw:2019} laboriously analyzed how to solve
\begin{equation}\label{eq:Brockett}
\min_{X^{\HH}X=I_k}\trace(DX^{\HH}AX)
\end{equation}
by numerical optimization techniques, where $A\in\bbC^{n\times n}$ and $D\in\bbC^{k\times k}$ are Hermitian but $D$
may be indefinite. Its objective function $\trace(DX^{\HH}AX)$ in $X$ on the Stiefel manifold
$\{X\in\bbR^{n\times k}\,:\,X^{\HH}X=I_k\}$ is known as the {\em Brockett cost function} in the case when $D$
is diagonal and positive semi-definite, and optimizing it
has often been used as an illustrative example for optimization on the Stiefel manifold \cite[p.80]{abms:2008}, \cite{bicc:2019,broc:1991}.

Our goal in this paper is to go beyond the Brockett cost function to investigate,
as an extension of \eqref{eq:min-trace-Herm},
\begin{equation}\label{eq:min-trace-Herm2}
	\inf_{X^{\HH}BX=I_k\,\, (\mbox{\scriptsize or $-I_k$})}\trace(DX^{\HH}AX),
\end{equation}
where 
both $B$ and $D$ are possibly indefinite.
Our first main result is an  analytical solution
to \eqref{eq:min-trace-Herm2} for positive definite $B$ (for which only $X^{\HH}BX=I_k$ can be used as a constraint) in terms of the eigen-decompositions of $D$ and matrix pencil
$A-\lambda B$ and the solution lends itself
to be computed by more efficient numerical linear algebra techniques \cite{abbd:1999,bddrv:2000,demm:1997,li:2015}.
In particular, this result yields an elegant solution to the widely studied \eqref{eq:Brockett} \cite{abms:2008,lisw:2019}.
Our second main result is for a more general
setting that $B$ is indefinite and possibly singular and $A-\lambda B$ is a positive semi-definite matrix
pencil.
We show that the infimum in \eqref{eq:min-trace-Herm2} is finite if and only if $D$ positive semi-definite and
establish analytical solutions to it when the infimum is finite.

Note that whether $D$ is diagonal or not is inconsequential so long that it is Hermitian and the constraint is either
$X^{\HH}BX=I_k$ or $X^{\HH}BX=- I_k$ because we can always perform an eigen-decomposition $D=Q\Omega Q^{\HH}$ to get
$$
\min_{X^{\HH}BX=\pm I_k}\trace(DX^{\HH}AX)=\min_{\wtd X^{\HH}B\wtd X=\pm I_k}\trace(\Omega \wtd X^{\HH}A\wtd X),
$$
where $X$ and $\wtd X$ are related by $\wtd X=XQ$, $Q$ is unitary and $\Omega$ is diagonal of eigenvalues.
So far, we have been focusing on ``minimization'', but these formulations admit straightforward restatements for
``maximization'' by simply considering $-A$
instead.

The rest of this paper is organized as follows.
We state our main results for \eqref{eq:min-trace-Herm2} in section~\ref{sec:B-definite} for positive definite $B$ (and with $X^{\HH}BX=I_k$) and in section~\ref{sec:B-indefinite} for the more general setting that $B$ is genuinely indefinite.
The proofs for the main results are presented in sections~\ref{sec:thm:main-SPD:min:pf} and \ref{sec:main-SPD:min:sep:+},
respectively.
We draw our conclusion in section~\ref{sec:concl}.

\textbf{Notation.} Throughout this paper, ${\mathbb C}^{n\times m}$ is the set
of all $n\times m$ complex matrices, ${\mathbb C}^n={\mathbb C}^{n\times 1}$,
and ${\mathbb C}={\mathbb C}^1$. ${\mathbb R}$ is set of all real numbers.
$I_n$ (or simply $I$ if its dimension is
clear from the context) is the $n\times n$ identity matrix.
For a matrix $X\in\bbC^{m\times n}$, ${\cal N}(X)=\{x\in\bbC^n\,:\,Xx=0\}$ and $\cR(X)=\{Xx\,:\,x\in\bbC^n\}$
are the null space and the range of $X$, respectively.
$X^{\HH}$ is the conjugate transpose of a vector or matrix.
$A\succ 0$ ($A\succeq 0$) means that $A$ is Hermitian positive (semi-)definite, and $A\prec 0$ ($A\preceq 0$) if
$-A\succ 0$ ($-A\succeq 0$).

\section{Positive Definite $B$}\label{sec:B-definite}
Throughout this section and section~\ref{sec:thm:main-SPD:min:pf},
$A,\, B\in\bbC^{n\times n}$ and $D\in\bbC^{k\times k}$ are Hermitian and $B$ is positive definite. Then $A-\lambda B$ admits the following eigen-decomposition
\begin{equation}\label{eq:AB-eigD:SPD}
U^{\HH}AU=\Lambda\equiv\diag(\lambda_1,\lambda_2,\ldots,\lambda_n), \quad U^{\HH}BU=I_n,
\end{equation}
where $\lambda_i$ are the eigenvalues of $A-\lambda B$ and are, without loss of generality, arranged in the ascending order
as in \eqref{eq:lambda-ascending}, and $U=[u_1,u_2,\ldots,u_n]$ is the eigenvector matrix and $B$-unitary: $Au_i=\lambda_i Bu_i$
for all $i$ and $U^{\HH}BU=I_n$.
Let the eigen-decomposition of $D$ be
\begin{subequations}\label{eq:eigD4D}
\begin{equation}\label{eq:eigD4D-1}
Q^{\HH}DQ=\Omega\equiv\diag(\omega_1,\omega_2,\ldots,\omega_k),
\end{equation}
where $Q\in\bbC^{k\times k}$ is unitary and, without loss of generality,
\begin{equation}\label{eq:delta-odr}
	\omega_1\ge\cdots\ge\omega_{\ell}\ge 0\ge \omega_{\ell+1}\ge\cdots\ge\omega_k.
\end{equation}
\end{subequations}
The case $\ell=0$ or $\ell=k$ corresponds to when $D$ has no positive eigenvalues, i.e., $D\preceq 0$, or no nonnegative eigenvalues, i.e., $D\succeq 0$, respectively.

Our main result of this section is Theorem~\ref{thm:main-SPD:min} below.

\begin{theorem}\label{thm:main-SPD:min}
Suppose that $A,\, B\in\bbC^{n\times n}$ and $D\in\bbC^{k\times k}$ are Hermitian and $B$ is positive definite, admitting the
eigen-decompositions in \eqref{eq:AB-eigD:SPD} and \eqref{eq:eigD4D}. Then
\begin{equation}\label{eq:main-SPD:min}
\min_{X^{\HH}BX=I_k}\trace(DX^{\HH}AX)=\sum_{i=1}^{\ell}\omega_i\lambda_i+\sum_{i=\ell+1}^k\omega_i\lambda_{i+n-k}.
\end{equation}
Furthermore, any minimizer $X_{\opt}$ has the following characterizations:
\begin{enumerate}[{\rm (a)}]
\item If $D$ is nonsingular, then $\cR(X_{\opt}Q)$ is the eigenspace of $A-\lambda B$
      {\rm \cite[p.303]{stsu:1990}},
      associated with the $\ell$ smallest and $k-\ell$ largest eigenvalues  of $A-\lambda B$.
      If also all $\omega_i$ are distinct, then
      $$
      (X_{\opt}Q)^{\HH}AX_{\opt}Q=\diag(\underbrace{\lambda_1,\lambda_2,\ldots,\lambda_{\ell}}_{\ell},\underbrace{\lambda_{n-k+\ell+1},\ldots,\lambda_n}_{k-\ell}).
      $$
\item Suppose that $D$ is possibly singular and has $\ell_+$ positive eigenvalues and $\ell_-$ negative eigenvalues, and let $\what Q\in\bbC^{k\times (\ell_++\ell_-)}$ be the one from $Q$ by keeping its first $\ell_+$ and last $\ell_-$ columns. Then
    $\cR(X_{\opt}\what Q)$ is the eigenspace of $A-\lambda B$ associated with its $\ell_+$ smallest and $\ell_-$ largest eigenvalues. If also the nonzero eigenvalues of $D$ are distinct, then
    $$
    (X_{\opt}\what Q)^{\HH}AX_{\opt}\what Q
    =\diag(\underbrace{\lambda_1,\lambda_2,\ldots,\lambda_{\ell_+}}_{\ell_+},\underbrace{\lambda_{n-\ell_-+1},\ldots,\lambda_n}_{\ell_-}).
    $$
\end{enumerate}
\end{theorem}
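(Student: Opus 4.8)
The plan is to reduce \eqref{eq:main-SPD:min} to the ``model'' problem in which both matrices are diagonal and then to peel it apart column by column. Since $U$ in \eqref{eq:AB-eigD:SPD} is invertible and $Q$ in \eqref{eq:eigD4D} is unitary, the substitution $X=UZQ^{\HH}$ (equivalently $Z=U^{-1}XQ$) is a bijection between $\{X\,:\,X^{\HH}BX=I_k\}$ and $\{Z\,:\,Z^{\HH}Z=I_k\}$, and a short computation gives $\trace(DX^{\HH}AX)=\trace\bigl(Q^{\HH}DQ\,Z^{\HH}U^{\HH}AU\,Z\bigr)=\trace(\Omega Z^{\HH}\Lambda Z)$. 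Writing $Z=[z_1,\ldots,z_k]$ with orthonormal columns, the objective becomes $\sum_{j=1}^{k}\omega_j\,z_j^{\HH}\Lambda z_j$, so the constraints couple the columns only through orthonormality. I would also record two elementary facts for later: for any $Z_1\in\bbC^{n\times m}$ with $Z_1^{\HH}Z_1=I_m$, the Hermitian compression $Z_1^{\HH}\Lambda Z_1$ has its $i$-th smallest eigenvalue sandwiched as $\lambda_i\le\mu_i^{\uparrow}\le\lambda_{i+n-m}$ (Cauchy/Poincar\'e separation), and $\trace(Z_1^{\HH}\Lambda Z_1)\ge\sum_{i=1}^m\lambda_i$ with equality iff $\cR(Z_1)$ is the invariant subspace of $A-\lambda B$ associated with $\lambda_1,\ldots,\lambda_m$ --- the latter being exactly Ky Fan's principle \eqref{eq:min-trace-Herm} applied to $U^{\HH}AU=\Lambda$.

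For the lower bound I would split the column index set according to \eqref{eq:delta-odr}: only the $\ell_+$ strictly positive and the $\ell_-$ strictly negative $\omega_j$ contribute. Collect the first $\ell_+$ columns of $Z$ into $Z_1$ (with $\Omega_1=\diag(\omega_1,\ldots,\omega_{\ell_+})\succ0$) and the last $\ell_-$ into $Z_2$ (with $\Omega_2=\diag(\omega_{k-\ell_-+1},\ldots,\omega_k)\prec0$), so that $\trace(\Omega Z^{\HH}\Lambda Z)=\trace(\Omega_1 Z_1^{\HH}\Lambda Z_1)+\trace(\Omega_2 Z_2^{\HH}\Lambda Z_2)$. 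A summation-by-parts in $j$ (write $\omega_j=\sum_{m\ge j}(\omega_m-\omega_{m+1})$ with nonnegative increments, $\omega_{\ell_++1}=0$) expresses $\trace(\Omega_1 Z_1^{\HH}\Lambda Z_1)$ as a nonnegative combination of the partial traces $\trace\bigl([z_1,\ldots,z_m]^{\HH}\Lambda[z_1,\ldots,z_m]\bigr)$, each $\ge\sum_{i=1}^m\lambda_i$ by Ky Fan; telescoping returns $\sum_{i=1}^{\ell_+}\omega_i\lambda_i$. The same manipulation applied to $-\Lambda$ (now invoking $\mu_i^{\uparrow}\le\lambda_{i+n-\ell_-}$, i.e.\ the top eigenvalues) gives $\trace(\Omega_2 Z_2^{\HH}\Lambda Z_2)\ge\sum_{j=k-\ell_-+1}^{k}\omega_j\lambda_{j+n-k}$. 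Adding the two yields precisely the right-hand side of \eqref{eq:main-SPD:min}. Equality is attained by $Z=[e_1,\ldots,e_{\ell},e_{n-k+\ell+1},\ldots,e_n]$, the unconstrained middle columns chosen orthonormally among rows $\ell+1,\ldots,n-k+\ell$ (possible since $k\le n$); transporting back through $X_{\opt}=UZQ^{\HH}$ produces an explicit minimizer and establishes the formula.

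For the minimizer characterizations I would analyze when the above chain is tight. At a minimizer the two block inequalities must hold with equality separately, and because $\Omega_1\succ0$ (resp.\ $\Omega_2\prec0$) strictly, tightness forces the eigenvalues of $Z_1^{\HH}\Lambda Z_1$ to be $\lambda_1,\ldots,\lambda_{\ell_+}$ (resp.\ those of $Z_2^{\HH}\Lambda Z_2$ to be $\lambda_{n-\ell_-+1},\ldots,\lambda_n$), hence $\trace(Z_1^{\HH}\Lambda Z_1)=\sum_{i=1}^{\ell_+}\lambda_i$ and $\trace(Z_2^{\HH}\Lambda Z_2)=\sum_{i=1}^{\ell_-}\lambda_{n-\ell_-+i}$. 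By the equality case of Ky Fan (applied to $\Lambda$ and to $-\Lambda$), $\cR(Z_1)$ and $\cR(Z_2)$ are the invariant subspaces of $A-\lambda B$ for its $\ell_+$ smallest and $\ell_-$ largest eigenvalues; since $\what Q$ selects exactly the columns of $Z$ (after right multiplication by $Q^{\HH}$) that make up $[Z_1\,|\,Z_2]$, we get $\cR(X_{\opt}\what Q)=\cR(U[Z_1\,|\,Z_2])$ equal to the claimed eigenspace, which is part~(b) --- and part~(a) is the special case $\ell_+=\ell$, $\ell_-=k-\ell$, $D$ nonsingular. When in addition the nonzero $\omega_i$ are distinct, $\Omega_1$ and $\Omega_2$ have simple spectra, so the equality case of von Neumann's trace inequality (equivalently, carrying the summation-by-parts identity to the equality level) forces $Z_1^{\HH}\Lambda Z_1=\diag(\lambda_1,\ldots,\lambda_{\ell_+})$ and $Z_2^{\HH}\Lambda Z_2=\diag(\lambda_{n-\ell_-+1},\ldots,\lambda_n)$; and since $\cR(Z_2)$ is a sum of eigenspaces of $\Lambda$ (hence $\Lambda$-invariant) and is orthogonal to $\cR(Z_1)$, the cross block $Z_1^{\HH}\Lambda Z_2$ vanishes, so $(X_{\opt}\what Q)^{\HH}A(X_{\opt}\what Q)$ is the stated diagonal matrix.

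The real work is not the inequality but the equality analysis, and the main obstacle there is the bookkeeping around multiplicities: repeated eigenvalues of $A-\lambda B$ straddling the ``cut'' indices $\ell$ and $n-k+\ell$ (where Ky Fan's minimizing subspace is only determined up to a choice inside the degenerate block), and the zero eigenvalues of $D$, whose corresponding columns of $Z$ are genuinely free and must be quotiented out --- which is exactly why the characterizations are phrased through $\what Q$ rather than $Q$. One should also note that the disjointness of the two invariant subspaces (needed so that their sum has the correct dimension $\ell_++\ell_-$) is in fact automatic, since $[Z_1\,|\,Z_2]$ has $\ell_++\ell_-$ orthonormal columns; the only places that require an explicit convention are the degenerate eigenvalue configurations just mentioned.
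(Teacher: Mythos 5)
Your proof is correct and reaches the same conclusions, but the technical route is genuinely different from the paper's. You share the initial reduction: the substitution $Z=U^{-1}XQ$ collapses the problem to $\min_{Z^{\HH}Z=I_k}\trace(\Omega Z^{\HH}\Lambda Z)$. From there the paths diverge. The paper's argument works through Cauchy's interlacing inequalities plus a majorization toolkit: the diagonal of $Z^{\HH}\Lambda Z$ is majorized by its eigenvalues (Lemma~\ref{lm:diag-by-eig}), the rearrangement-type Lemma~\ref{lm:YYZZ} turns that into the desired inequality, and Lemma~\ref{lm:diag=eig} (diagonal $=$ eigenvalues $\Rightarrow$ diagonal matrix) closes the equality case. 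Your argument instead splits the columns by $\sign(\omega_j)$, uses an Abel summation of the decreasing weights $\omega_j$ to write the objective as a nonnegative combination of \emph{partial} traces $\trace([z_1,\ldots,z_m]^{\HH}\Lambda[z_1,\ldots,z_m])$, and then invokes Ky Fan's principle \eqref{eq:min-trace-Herm} (and its companion for $-\Lambda$) at every $m$, telescoping back. The equality analysis then comes for free from the equality case of Ky Fan rather than from separate majorization lemmas. What your route buys: it is essentially self-contained given \eqref{eq:min-trace-Herm}, avoiding the auxiliary majorization lemmas, and it absorbs the singular-$D$ case at the outset (the $\omega_j=0$ columns simply never enter), whereas the paper handles singular $D$ as a separate reduction via $\what Q$. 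What the paper's route buys: it exposes the majorization structure directly, which is in the same spirit as later arguments in the paper and yields the chain \eqref{eq:thm1:pf-2} in a single sweep rather than two sign-blocks.

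Two small points of precision worth tightening. First, after establishing equality in the Abel chain you say tightness ``forces the eigenvalues of $Z_1^{\HH}\Lambda Z_1$ to be $\lambda_1,\ldots,\lambda_{\ell_+}$''; the immediate consequence of tightness at $m=\ell_+$ is that $\trace(Z_1^{\HH}\Lambda Z_1)=\sum_{i=1}^{\ell_+}\lambda_i$, and it is the equality case of Ky Fan that then identifies $\cR(Z_1)$ as an eigenspace and hence pins down the eigenvalues (the logical order matters). Second, your parenthetical about ``the unconstrained middle columns chosen orthonormally among rows $\ell+1,\ldots,n-k+\ell$'' does not quite match the explicit minimizer $Z=[e_1,\ldots,e_{\ell},e_{n-k+\ell+1},\ldots,e_n]$ you just wrote down, which has no such middle rows; either description gives a valid optimizer, but the two should not be juxtaposed as if they were the same construction. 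Neither point is a gap, just wording.
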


There are a couple of remarks are in order. Firstly, the minimization extracts out the extreme eigenvalues
of $A-\lambda B$. Secondly, if all $\omega_i$ are distinct and nonzero, then the columns of $X_{\opt}Q$
are the associated eigenvectors. Thirdly, if $D$ does have $0$ as some of its eigenvalues, then in the notation of
Theorem~\ref{thm:main-SPD:min}(b), those eigenvalues $0$ can be matched to any $\lambda_i$ ($\ell_++1\le i\le n-\ell_-$), other things being equal, to still yield the same objective value as the optimal one in the right-hand side of \eqref{eq:main-SPD:min}.
Fourthly, upon replacing $A$ by $-A$, we obtain immediately the following corollary.

\begin{corollary}\label{cor:main-SPD:min}
Under the conditions of Theorem~\ref{thm:main-SPD:min}
\begin{equation}\label{eq:thm2}
\max_{X^{\HH}BX=I_k}\trace(DX^{\HH}AX)=\sum_{i=1}^{\ell}\omega_i\lambda_{n+1-i}+\sum_{i=\ell+1}^k\omega_i\lambda_{k-i+1}.
\end{equation}
Furthermore, any maximizer $X_{\opt}$ has the following characterizations:
\begin{enumerate}[{\rm (a)}]
\item If $D$ is nonsingular, then $\cR(X_{\opt}Q)$ is the eigenspace of $A-\lambda B$
      associated with the $k-\ell$ smallest and $\ell$ largest eigenvalues  of $A-\lambda B$.
      If also all $\omega_i$ are distinct, then
      $$
      (X_{\opt}Q)^{\HH}AX_{\opt}Q=
      \diag(\underbrace{\lambda_n,\lambda_{n-1},\ldots,\lambda_{n+1-\ell}}_{\ell},\underbrace{\lambda_{k-\ell},\ldots,\lambda_1}_{k-\ell}).
  $$
\item Suppose that $D$ is possibly singular and has $\ell_+$ positive eigenvalues and $\ell_-$ negative eigenvalues, and let $\what Q\in\bbC^{k\times (\ell_++\ell_-)}$ be the one from $Q$ by keeping its first $\ell_-$ and last $\ell_+$ columns. Then
    $\cR(X_{\opt}\what Q)$ is the eigenspace of $A-\lambda B$ associated with its $\ell_-$ smallest and $\ell_+$ largest eigenvalues. If also the nonzero eigenvalues of $D$ are distinct, then
    $$
    (X_{\opt}\what Q)^{\HH}AX_{\opt}\what Q=
    \diag(\underbrace{\lambda_n,\lambda_{n-1},\ldots,\lambda_{n+1-\ell_+}}_{\ell_+},\underbrace{\lambda_{\ell_-},\ldots,\lambda_1}_{\ell_-}).
    $$
\end{enumerate}
\end{corollary}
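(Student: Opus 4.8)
The plan is to obtain Corollary~\ref{cor:main-SPD:min} from Theorem~\ref{thm:main-SPD:min} by the substitution $A\leftarrow-A$ anticipated in the remark preceding the statement. For every $X$ with $X^{\HH}BX=I_k$ one has $\trace(DX^{\HH}AX)=-\trace\bigl(DX^{\HH}(-A)X\bigr)$, so a matrix maximizes $\trace(DX^{\HH}AX)$ over $\{X:X^{\HH}BX=I_k\}$ exactly when it minimizes $\trace\bigl(DX^{\HH}(-A)X\bigr)$ over the same set, and
$$\max_{X^{\HH}BX=I_k}\trace(DX^{\HH}AX)=-\min_{X^{\HH}BX=I_k}\trace\bigl(DX^{\HH}(-A)X\bigr).$$
Hence it suffices to apply Theorem~\ref{thm:main-SPD:min} verbatim to the Hermitian triple $(-A,B,D)$, for which $B$ is still positive definite.

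First I would record the eigen-data. From $Au_i=\lambda_iBu_i$ with $\lambda_1\le\cdots\le\lambda_n$ we get $(-A)u_i=(-\lambda_i)Bu_i$, so setting $\mu_j=-\lambda_{n+1-j}$ for $j=1,\ldots,n$ produces the ascending list $\mu_1\le\cdots\le\mu_n$ of eigenvalues of $-A-\lambda B$ with $B$-unitary eigenvector matrix $[u_n,u_{n-1},\ldots,u_1]$ (the columns of $U$ in reverse order). The eigen-decomposition \eqref{eq:eigD4D} of $D$ is untouched, so $Q$, $\Omega$, $\ell$, $\ell_+$ and $\ell_-$ are unchanged. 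Substituting $(-A,B,D)$ into \eqref{eq:main-SPD:min} gives
$$\min_{X^{\HH}BX=I_k}\trace\bigl(DX^{\HH}(-A)X\bigr)=\sum_{i=1}^{\ell}\omega_i\mu_i+\sum_{i=\ell+1}^k\omega_i\mu_{i+n-k},$$
and then $\mu_i=-\lambda_{n+1-i}$, $\mu_{i+n-k}=-\lambda_{k+1-i}$, together with the sign flip above, yield \eqref{eq:thm2}.

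For the maximizer characterizations I would feed a maximizer $X_{\opt}$, viewed as a minimizer for $(-A,B,D)$, into parts (a) and (b) of Theorem~\ref{thm:main-SPD:min}. In part (a), $\cR(X_{\opt}Q)$ is the eigenspace of $-A-\lambda B$ for its $\ell$ smallest eigenvalues $\mu_1,\ldots,\mu_\ell$ and its $k-\ell$ largest eigenvalues $\mu_{n-k+\ell+1},\ldots,\mu_n$; under $\mu_j=-\lambda_{n+1-j}$ these correspond to $u_n,\ldots,u_{n+1-\ell}$ and $u_{k-\ell},\ldots,u_1$, i.e.\ the eigenspace of $A-\lambda B$ for its $\ell$ largest and $k-\ell$ smallest eigenvalues, and negating $(X_{\opt}Q)^{\HH}(-A)(X_{\opt}Q)=\diag(\mu_1,\ldots,\mu_\ell,\mu_{n-k+\ell+1},\ldots,\mu_n)$ reproduces $\diag(\lambda_n,\ldots,\lambda_{n+1-\ell},\lambda_{k-\ell},\ldots,\lambda_1)$ when the $\omega_i$ are distinct. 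Part (b) is the same computation restricted to the $\ell_++\ell_-$ columns of $Q$ that carry the nonzero eigenvalues of $D$: the $\ell_+$ smallest and $\ell_-$ largest eigenvalues of $-A-\lambda B$ are $-\lambda_n,\ldots,-\lambda_{n+1-\ell_+}$ and $-\lambda_{\ell_-},\ldots,-\lambda_1$, so $\cR(X_{\opt}\what Q)$ becomes the eigenspace of $A-\lambda B$ for its $\ell_+$ largest and $\ell_-$ smallest eigenvalues, with the diagonal form following by negation.

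The calculations are routine; the only point demanding care — and the main (minor) obstacle — is the index bookkeeping forced by the reversal $j\mapsto n+1-j$, which interchanges ``smallest'' and ``largest'' for the pencil $A-\lambda B$ while leaving the ordering of $D$'s eigenvalues (hence the roles of $\ell_\pm$ and of the columns of $Q$) fixed, so one must line up the reversed pencil-ordering against the unchanged $D$-ordering consistently in the value formula \eqref{eq:thm2}, in the description of $\cR(X_{\opt}Q)$ (resp.\ $\cR(X_{\opt}\what Q)$), and in the diagonal forms.
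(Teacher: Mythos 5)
Your proposal is correct and is exactly the paper's intended argument: the paper gives no separate proof, obtaining the corollary ``immediately'' from Theorem~\ref{thm:main-SPD:min} by the substitution $A\leftarrow -A$, and your index bookkeeping ($\mu_j=-\lambda_{n+1-j}$, hence $\mu_{i+n-k}=-\lambda_{k+1-i}$) checks out for the value formula, the eigenspace descriptions, and the diagonal forms. Your reading of $\what Q$ in part~(b) as the columns of $Q$ carrying the nonzero eigenvalues of $D$ (i.e., the first $\ell_+$ and last $\ell_-$ columns, inherited unchanged from Theorem~\ref{thm:main-SPD:min}(b)) is the one that makes the displayed diagonal form come out right.
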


The proof of Theorem~\ref{thm:main-SPD:min} occupies a few pages and is deferred to section~\ref{sec:thm:main-SPD:min:pf}.

\section{Genuinely Indefinite $B$}\label{sec:B-indefinite}
Throughout this section and section~\ref{sec:main-SPD:min:sep:+}, $A-\lambda B\in\bbC^{n\times n}$ is a positive semi-definite matrix pencil, i.e.,
$A$ and $B$ are Hermitian and
there exists $\lambda_0\in \mathbb{R}$ such that $A-\lambda_0 B\succeq0$, and $B$ is genuinely indefinite in the sense that
$B$ has both positive and negative eigenvalues.
We are interested in  \eqref{eq:min-trace-Herm2}:
\begin{equation}\tag{\ref{eq:min-trace-Herm2}}
\inf\trace(DX^{\HH}AX)\quad\mbox{subject to either $X^{\HH}BX=I_k$ or $X^{\HH}BX=-I_k$}.
\end{equation}
After presenting our main results for it, we will discuss the
more generally constraint $X^{\HH}BX=J_k$, where $J_k$
is as given in \eqref{eq:Jk-dfn}.

Before we investigate \eqref{eq:min-trace-Herm2}, we review some of the related concepts and
results about a positive semi-definite matrix pencil $A-\lambda B$ \cite{lilb:2013}.
Let the integer triplet $(n_+, n_0, n_-)$
be the {\em inertia\/} of $B$, meaning $B$ has $n_+$ positive, $n_0$ 0, and $n_-$ negative eigenvalues,
respectively. Necessarily
\begin{equation}\label{eq:rankB}
	r:= \rank(B)=n_++n_-.
\end{equation}
We say $\mu\ne\infty$ is a {\em finite eigenvalue\/}
of $A-\lambda B$ if
\begin{equation}\label{eq:eig-dfn}
	\rank(A-\mu B)<\max_{\lambda\in{\mathbb C}}\rank(A-\lambda B),
\end{equation}
and $x\in{\mathbb C}^n$ is a corresponding {\em eigenvector\/} if $0\ne x\not\in{\cal N}(A)\cap{\cal N}(B)$ satisfies
\begin{equation}\label{eq:eigv-dfn}
	 Ax=\mu Bx,
\end{equation}
or equivalently, $0\ne x\in{\cal N}(A-\mu B)\backslash({\cal N}(A)\cap{\cal N}(B))$.
It is known \cite{lilb:2013} that
{\em a positive semi-definite pencil $A-\lambda B$ has only $r=\rank(B)$ finite eigenvalues all of which are real}.
Denote these finite eigenvalues by $\lambda_i^{\pm}$ ordered as
\begin{equation}\label{eq:finite-eig}
\lambda_{n_-}^-\le\cdots\le\lambda_1^-\le\lambda_1^+\le\cdots\le\lambda_{n_+}^+.
\end{equation}
It has been proved that for all $i,\,j$
\begin{equation}\label{eq:finite-eig-property}
	\lambda_i^-\le \lambda_0\le\lambda_j^+.
\end{equation}

As in section~\ref{sec:B-definite}, let $D$ have its eigen-decomposition  given by \eqref{eq:eigD4D}:
$$
Q^{\HH}DQ=\Omega\equiv\diag(\omega_1,\omega_2,\ldots,\omega_k), \quad
\omega_1\ge\omega_2\ge\cdots\ge\omega_k.
$$
Our main result of the section is Theorem~\ref{thm:main-SPD:min:sep:+} below.

\begin{theorem}\label{thm:main-SPD:min:sep:+}
Suppose that $A,\, B\in\bbC^{n\times n}$ and $D\in\bbC^{k\times k}$ are Hermitian, $A\ne 0$ and $B$ is
genuinely indefinite, $k\le n_+$, and the matrix pencil $A-\lambda B$ is positive semi-definite.
	Then
$$
\inf_{X^{\HH}BX=I_k}\trace(D X^{\HH}AX)>-\infty
$$
if and only if $D\succeq0$, in which case
\begin{equation}\label{eq:main-SPD:min:sep:+}
	\inf_{X^{\HH}BX=I_k}\trace(D X^{\HH}AX)=\sum_{i=1}^{k}\omega_i\lambda_i^+.
\end{equation}
The infimum can be attained, when $A-\lambda B$ is diagonalizable, by $X$ such that the columns of
$XQ$ are the eigenvectors of $A-\lambda B$ associated with its eigenvalues $\lambda_i^+$ for $1\le i\le k$, respectively.
\end{theorem}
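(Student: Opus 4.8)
The plan is to reuse the change of variables $\widetilde X=XQ$: since $X^{\HH}BX=I_k$ is equivalent to $\widetilde X^{\HH}B\widetilde X=I_k$ (i.e.\ the columns $\widetilde x_1,\dots,\widetilde x_k$ of $\widetilde X$ are $B$-orthonormal) and $\trace(DX^{\HH}AX)=\trace(\Omega\,\widetilde X^{\HH}A\widetilde X)=\sum_{i=1}^{k}\omega_i\,\widetilde x_i^{\HH}A\widetilde x_i$, I will work throughout with $B$-orthonormal $k$-tuples and the weighted Rayleigh sum $\sum_i\omega_i\,\widetilde x_i^{\HH}A\widetilde x_i$. Feasibility already forces $\widetilde X^{\HH}B\widetilde X=I_k\succ0$, so the columns of $\widetilde X$ span a $k$-dimensional $B$-positive subspace, which is why $k\le n_+$ must be assumed. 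The proof splits into three parts: (I) the lower bound $\sum_i\omega_i\,\widetilde x_i^{\HH}A\widetilde x_i\ge\sum_i\omega_i\lambda_i^+$ when $D\succeq0$; (II) a matching construction; (III) the reduction to $-\infty$ when $D\not\succeq0$.

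For (I), I would reduce to the classical semidefinite-pencil trace principle \eqref{eq:min-trace-Herm-SDP} by summation by parts. For each $j\le k$ the first $j$ columns form a $B$-orthonormal $j$-tuple, so \eqref{eq:min-trace-Herm-SDP} with $k_+=j$, $k_-=0$ (legitimate since $j\le k\le n_+$) gives $\sum_{i=1}^{j}\widetilde x_i^{\HH}A\widetilde x_i=\trace(\widetilde X_j^{\HH}A\widetilde X_j)\ge\sum_{i=1}^{j}\lambda_i^+$. Because $\omega_1\ge\cdots\ge\omega_k\ge0$, applying Abel summation to these partial-sum inequalities yields $\sum_{i=1}^{k}\omega_i\,\widetilde x_i^{\HH}A\widetilde x_i\ge\sum_{i=1}^{k}\omega_i\lambda_i^+$. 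For (II), when $A-\lambda B$ is diagonalizable the structure of diagonalizable semidefinite pencils supplies $B$-orthonormal eigenvectors $u_1^+,\dots,u_k^+$ for $\lambda_1^+,\dots,\lambda_k^+$ (on the ``$+$'' side these have $B$-norm $+1$); setting $X=[u_1^+,\dots,u_k^+]Q^{\HH}$ one checks $X^{\HH}BX=I_k$ and $\trace(DX^{\HH}AX)=\trace(\Omega\,\diag(\lambda_1^+,\dots,\lambda_k^+))=\sum_i\omega_i\lambda_i^+$, which together with (I) proves \eqref{eq:main-SPD:min:sep:+} and the stated attainment. For a non-diagonalizable semidefinite pencil, honest $B$-normalizable eigenvectors may be unavailable, but one still obtains the infimum $\le\sum_i\omega_i\lambda_i^+$ by combining (I) with an approximation: pick nested $B$-positive subspaces of dimensions $1,\dots,k$ on which the Rayleigh quotient is within $\varepsilon$ of $\lambda_1^+,\dots,\lambda_k^+$ (from the minimax theory of positive semidefinite pencils), $B$-orthonormalize, and use $\omega_i\ge0$.

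For (III), assume $\omega_k<0$ and, discarding the degenerate case where $A$ is a real scalar multiple of $B$ (for which the objective is constant), I would drive $\sum_i\omega_i\,\widetilde x_i^{\HH}A\widetilde x_i$ to $-\infty$. The crux is: for a positive semidefinite pencil with genuinely indefinite $B$ and $A$ not a scalar multiple of $B$, one has $\sup\{w^{\HH}Aw:\ w^{\HH}Bw=1\}=+\infty$. Indeed there is a $B$-isotropic vector $\widehat w$ with $\widehat w^{\HH}A\widehat w=\widehat w^{\HH}(A-\lambda_0 B)\widehat w>0$ — if this vanished for every isotropic $\widehat w$, positive semidefiniteness of $A-\lambda_0 B$ would give $(A-\lambda_0 B)\widehat w=0$ for all such $\widehat w$, and since isotropic vectors span $\bbC^n$ this forces $A=\lambda_0 B$ — and then $w=t\widehat w+p$, for a suitable $p$ with $p^{\HH}Bp=1$ and $p^{\HH}B\widehat w=0$, stays on the constraint surface while $w^{\HH}Aw\to+\infty$. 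To use this I would fix $B$-orthonormal $\widetilde x_1,\dots,\widetilde x_{k-1}$ (possible, as $k-1<n_+$) chosen so that $A$ is not a scalar multiple of $B$ on their $B$-orthogonal complement $\mathcal V$ — such a choice exists, since otherwise $A=cB$ globally; because $B$ restricted to $\mathcal V$ has inertia $(n_+-(k-1),n_0,n_-)$ with $n_+-(k-1)\ge1$ and $n_-\ge1$, it is still genuinely indefinite, so the preceding fact applies inside $\mathcal V$ and produces $\widetilde x_k\in\mathcal V$ with $\widetilde x_k^{\HH}B\widetilde x_k=1$ and $\widetilde x_k^{\HH}A\widetilde x_k$ arbitrarily large. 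The $k$-tuple $\widetilde x_1,\dots,\widetilde x_k$ is feasible and $\sum_i\omega_i\,\widetilde x_i^{\HH}A\widetilde x_i$ equals a fixed constant plus $\omega_k\,\widetilde x_k^{\HH}A\widetilde x_k\to-\infty$.

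I expect part (III) to be the main obstacle: establishing the unboundedness while respecting $B$-orthonormality, in particular showing that the $B$-orthogonal complement of a $B$-positive $(k-1)$-tuple stays genuinely indefinite (this is exactly where $k\le n_+$ is used) and can be arranged so that $A$ is not a multiple of $B$ there, and extracting from the semidefiniteness hypothesis a $B$-isotropic direction with positive $A$-form. A secondary difficulty is the upper bound in (II) for a non-diagonalizable pencil, where eigenvectors need not be $B$-normalizable and a minimax/approximation argument is needed.
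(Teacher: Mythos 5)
Your route is genuinely different from the paper's, and your part (I) is cleaner. Instead of completing $Y$ to a $J_n$-unitary matrix, invoking the hyperbolic polar decomposition (Lemmas~\ref{lm:complement-basis:+} and~\ref{lm:polar-decomposition:+}), Ostrowski's inequalities, and Theorem~\ref{thm:main-SPD:min}, you apply the already established semidefinite-pencil principle \eqref{eq:min-trace-Herm-SDP} to each leading $j$-subset of columns to get the partial-sum inequalities $\sum_{i\le j}\widetilde x_i^{\HH}A\widetilde x_i\ge\sum_{i\le j}\lambda_i^+$, and then Abel-sum them against $\omega_1\ge\cdots\ge\omega_k\ge0$. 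That step is correct and, unlike the paper's argument, treats the singular-$B$ and non-diagonalizable cases uniformly. Part (II) for diagonalizable pencils matches the paper; for non-diagonalizable pencils the paper's perturbation $A\mapsto A+\varepsilon E$, which reduces to the diagonalizable case, is easier to close rigorously than the minimax/approximation argument you sketch.

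The concrete gap is in part (III), in the construction $w=t\widehat w+p$ with the two conditions $p^{\HH}Bp=1$ and $p^{\HH}B\widehat w=0$. Such a $p$ need not exist: when the positive inertia of $B|_{\mathcal V}$ is exactly one (allowed, since the inertia of $B|_{\mathcal V}$ is $(n_+-(k-1),n_0,n_-)$ and you only have $n_+-(k-1)\ge1$), the set $\{p\in\mathcal V:\ p^{\HH}B\widehat w=0\}$ can be entirely $B$-nonpositive. For example, take $\dim\mathcal V=2$, $B|_{\mathcal V}=\diag(1,-1)$ and the isotropic $\widehat w=(1,1)$: then $p^{\HH}B\widehat w=0$ forces $p_1=p_2$, so every such $p$ is isotropic and no $p$ with $p^{\HH}Bp=1$ exists, even though the supremum you want is still $+\infty$. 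The fix is small: keep only $p^{\HH}Bp=1$ (such $p$ exists because $B|_{\mathcal V}$ has a positive direction) and, writing $a=p^{\HH}B\widehat w$, take $t=is\,\overline{a}/|a|^2$ with $s\in\mathbb R$ (any $t$ if $a=0$); then $w^{\HH}Bw=2\operatorname{Re}(ta)+p^{\HH}Bp\equiv1$ while $w^{\HH}Aw=|t|^2\widehat w^{\HH}A\widehat w+O(|t|)\to+\infty$. A second loose end is the assertion that the $(k-1)$-tuple can always be arranged so that $A-\lambda_0B$ does not vanish on $\mathcal V$; ``otherwise $A=cB$ globally'' is not immediate, since $A-\lambda_0B$ could a priori be supported outside some particular $\mathcal V$, and one must argue that for a suitable $B$-positive $(k-1)$-tuple the complement $\mathcal V$ is not contained in $\ker(A-\lambda_0B)$. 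The paper avoids both issues by passing to the explicit $J_n$-unitary parametrization, under which the divergence to $-\infty$ is a direct computation in the scalar $\sigma$.
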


Our proof of this theorem is rather involved and will be given in section~\ref{sec:main-SPD:min:sep:+}.
Apply Theorem~\ref{thm:main-SPD:min:sep:+} to the matrix pencil $A-(-\lambda)(-B)$, we immediately conclude the following corollary.

\begin{corollary}\label{cor:main-SPD:min:sep:-}
Suppose the conditions of Theorem~\ref{thm:main-SPD:min:sep:+}, except now $k\le n_-$. Then
$$
\inf_{X^{\HH}BX=-I_k}\trace(D X^{\HH}AX)>-\infty
$$
if and only if $D\succeq0$, in which case
\begin{equation}\label{eq:main-SPD:min:sep:-}
	\inf_{X^{\HH}BX=-I_k}\trace(D X^{\HH}AX)=-\sum_{i=1}^{k}\omega_i\lambda_i^-.
\end{equation}
The infimum can be attained, when $A-\lambda B$ is diagonalizable, by $X$ such that the columns of
$XQ$ are the eigenvectors of $A-\lambda B$ associated with its eigenvalues $\lambda_i^-$ for $1\le i\le k$, respectively.
\end{corollary}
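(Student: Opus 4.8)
The plan is to obtain Corollary~\ref{cor:main-SPD:min:sep:-} from Theorem~\ref{thm:main-SPD:min:sep:+} purely by the substitution suggested in the text, $B\leftarrow\wtd B:=-B$ and $\lambda\leftarrow-\lambda$, without redoing any of the analysis. First I would record the structural bookkeeping: the inertia of $\wtd B$ is $(\wtd n_+,\wtd n_0,\wtd n_-)=(n_-,n_0,n_+)$, so the corollary's hypothesis $k\le n_-$ is precisely the hypothesis $k\le\wtd n_+$ needed to invoke Theorem~\ref{thm:main-SPD:min:sep:+} for the pencil $A-\lambda\wtd B$, and $\wtd B$ is genuinely indefinite because $B$ is. Next I would check that $A-\lambda\wtd B$ is again a positive semi-definite matrix pencil: since $A-\lambda_0 B\succeq 0$ for some $\lambda_0\in\bbR$, the choice $\wtd\lambda_0:=-\lambda_0$ gives $A-\wtd\lambda_0\wtd B=A-\lambda_0 B\succeq 0$. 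Thus Theorem~\ref{thm:main-SPD:min:sep:+} applies verbatim to $A-\lambda\wtd B$ with the same $D$ (and the same $A\ne0$).

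Then I would translate the data. The elementary equivalence $Ax=\mu Bx\iff Ax=(-\mu)\wtd B x$ shows that the finite eigenvalues of $A-\lambda\wtd B$ are exactly the negatives of those of $A-\lambda B$; matching the ordering \eqref{eq:finite-eig} for $A-\lambda B$ against the analogous ordering for $A-\lambda\wtd B$, whose positive side carries $\wtd n_+=n_-$ entries, forces $\wtd\lambda_i^+=-\lambda_i^-$ for $1\le i\le n_-$ (and, symmetrically, $\wtd\lambda_j^-=-\lambda_j^+$ for $1\le j\le n_+$). The constraint transforms by $X^{\HH}BX=-I_k\iff X^{\HH}\wtd BX=I_k$. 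Hence the finiteness criterion ``$>-\infty$ iff $D\succeq0$'' carries over unchanged, and when $D\succeq0$, equation \eqref{eq:main-SPD:min:sep:+} applied to $A-\lambda\wtd B$ reads
$$
\inf_{X^{\HH}BX=-I_k}\trace(DX^{\HH}AX)=\inf_{X^{\HH}\wtd BX=I_k}\trace(DX^{\HH}AX)=\sum_{i=1}^k\omega_i\wtd\lambda_i^+=-\sum_{i=1}^k\omega_i\lambda_i^-,
$$
which is exactly \eqref{eq:main-SPD:min:sep:-}.

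For the attainment statement I would observe that $Ax=\lambda_i^- Bx$ is equivalent to $Ax=\wtd\lambda_i^+\wtd B x$, so an eigenvector of $A-\lambda B$ associated with $\lambda_i^-$ is an eigenvector of $A-\lambda\wtd B$ associated with $\wtd\lambda_i^+$ and conversely, and that diagonalizability of a positive semi-definite pencil is obviously preserved under $B\leftarrow-B$. Thus the attainment clause of Theorem~\ref{thm:main-SPD:min:sep:+} (columns of $XQ$ being eigenvectors for $\wtd\lambda_i^+$, $1\le i\le k$) is precisely the claimed clause of the corollary (columns of $XQ$ being eigenvectors for $\lambda_i^-$). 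I do not expect a genuine obstacle here; the only points requiring care are the sign-and-index bookkeeping for the relabeled eigenvalues $\wtd\lambda_i^{\pm}$ and the verification that both structural hypotheses — positive semi-definiteness of the pencil (via the shifted $\wtd\lambda_0=-\lambda_0$) and, for the attainment part, diagonalizability — survive the substitution $B\leftarrow-B$.
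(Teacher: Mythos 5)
Your proof is correct and is exactly the argument the paper intends: the authors state only that applying Theorem~\ref{thm:main-SPD:min:sep:+} to the pencil $A-(-\lambda)(-B)$ immediately yields the corollary, and your substitution $B\leftarrow -B$, $\lambda\leftarrow-\lambda$, together with the inertia, positive-semidefiniteness, eigenvalue-relabeling ($\wtd\lambda_i^+=-\lambda_i^-$), constraint, and attainment bookkeeping, is precisely the content of that one-line reduction made explicit.
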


Combining Theorem~\ref{thm:main-SPD:min:sep:+} and Corollary~\ref{cor:main-SPD:min:sep:-}, we present
a result for the more general constraint $X^{\HH}BX=J_k$, whose proof is deferred to section~\ref{sec:main-SPD:min:sep:+} as well.

\begin{corollary}\label{cor:main-SPD:min:sep}
Suppose that $A,\, B\in\bbC^{n\times n}$ and $D_{\pm}\in\bbC^{k_{\pm}\times k_{\pm}}$ are Hermitian, $A\ne 0$ and $B$ is
genuinely indefinite, $k_{\pm}\le n_{\pm}$, and the matrix pencil $A-\lambda B$ is positive semi-definite.
Let
$$
J_k=\begin{bmatrix}
		I_{k_+} & \\ & -I_{k_-}
	\end{bmatrix},\quad
D=\kbordermatrix{ &\sss k_+ &\sss k_- \\
	\sss k_+ & D_+ & \\
    \sss k_- &  & D_-},
$$
and denote by $\omega_1^+\ge\cdots\ge\omega_{k_+}^+$ and $\omega_1^-\ge\cdots\ge\omega_{k_-}^-$
the eigenvalues of $D_+$ and $D_-$, respectively.
If  $D_{\pm}\succeq0$, then
\begin{equation}\label{eq:main-SPD:min:sep}
	\inf_{X^{\HH}BX=J_k}\trace(DX^{\HH}AX)=\sum_{i=1}^{k_+}\omega_i^+\lambda_i^+-\sum_{i=1}^{k_-}\omega_i^-\lambda_i^-.
\end{equation}
The infimum can be attained when $A-\lambda B$ is diagonalizable.
\end{corollary}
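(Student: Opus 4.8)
\noindent\emph{Proof idea.}
The plan is to reduce Corollary~\ref{cor:main-SPD:min:sep} to Theorem~\ref{thm:main-SPD:min:sep:+} and Corollary~\ref{cor:main-SPD:min:sep:-} by exploiting the common block structure of $J_k$ and $D$. Partition any feasible $X=[X_+,\,X_-]$ conformally with $J_k$, where $X_+\in\bbC^{n\times k_+}$ and $X_-\in\bbC^{n\times k_-}$. Then $X^{\HH}BX=J_k$ splits into $X_+^{\HH}BX_+=I_{k_+}$, $X_-^{\HH}BX_-=-I_{k_-}$, and the coupling relation $X_+^{\HH}BX_-=0$, whereas the block-diagonal form of $D$ gives the separation $\trace(DX^{\HH}AX)=\trace(D_+X_+^{\HH}AX_+)+\trace(D_-X_-^{\HH}AX_-)$.

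First I would prove ``$\ge$'' in \eqref{eq:main-SPD:min:sep}. Dropping the coupling relation $X_+^{\HH}BX_-=0$ only enlarges the feasible set, so the infimum is bounded below by the sum of the two independent infima over $X_+^{\HH}BX_+=I_{k_+}$ and over $X_-^{\HH}BX_-=-I_{k_-}$. Since $D_+\succeq0$ and $k_+\le n_+$, Theorem~\ref{thm:main-SPD:min:sep:+} evaluates the first as $\sum_{i=1}^{k_+}\omega_i^+\lambda_i^+$; since $D_-\succeq0$ and $k_-\le n_-$, Corollary~\ref{cor:main-SPD:min:sep:-} evaluates the second as $-\sum_{i=1}^{k_-}\omega_i^-\lambda_i^-$. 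Adding the two gives the claimed lower bound.

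For ``$\le$'' and the attainment statement I would build a feasible $X$ from eigenvectors of $A-\lambda B$. When $A-\lambda B$ is diagonalizable, the canonical form of a positive semi-definite pencil \cite{lilb:2013} yields $B$-orthonormal eigenvector systems $V_\pm$ (for the eigenvalues $\{\lambda_i^\pm\}$) with $V_+^{\HH}BV_+=I$, $V_-^{\HH}BV_-=-I$, $V_+^{\HH}BV_-=0$, and $V_\pm^{\HH}AV_\pm$ diagonal. Choosing $X_+$ to be the eigenvectors for $\lambda_1^+,\dots,\lambda_{k_+}^+$ rotated by a unitary $Q_+$ diagonalizing $D_+$, and $X_-$ the eigenvectors for $\lambda_1^-,\dots,\lambda_{k_-}^-$ rotated by a unitary $Q_-$ diagonalizing $D_-$ --- exactly the minimizers supplied by Theorem~\ref{thm:main-SPD:min:sep:+} and Corollary~\ref{cor:main-SPD:min:sep:-} --- makes $X_+^{\HH}BX_-=0$ automatic, so $X=[X_+,X_-]$ is feasible and attains the right-hand side of \eqref{eq:main-SPD:min:sep}. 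For a general, possibly non-diagonalizable positive semi-definite pencil one still has mutually $B$-orthogonal invariant subspaces $\mathcal{X}_+$ and $\mathcal{X}_-$ on which $B$ is positive, respectively negative, definite; the near-minimizers constructed in the proofs of Theorem~\ref{thm:main-SPD:min:sep:+} and Corollary~\ref{cor:main-SPD:min:sep:-} can be taken with $\cR(X_+)\subseteq\mathcal{X}_+$ and $\cR(X_-)\subseteq\mathcal{X}_-$, so stacking them preserves $X_+^{\HH}BX_-=0$ and shows the infimum is still approached. The step I expect to be the main obstacle is exactly this last one: verifying that the near-optimal blocks can be localized to the $B$-definite parts so that the coupling constraint $X_+^{\HH}BX_-=0$ is satisfied for free; in the diagonalizable case it is immediate from $B$-orthogonality of eigenvectors of distinct type, but in general it has to be extracted from the structure of positive semi-definite pencils already used in the earlier proofs.
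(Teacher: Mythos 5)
Your proposal follows essentially the same route as the paper's proof for the main body of the argument: partition $X=[X_+,X_-]$ conformally with the block structure of $J_k$ and $D$, split the objective into $\trace(D_+X_+^{\HH}AX_+)+\trace(D_-X_-^{\HH}AX_-)$, drop the coupling constraint $X_+^{\HH}BX_-=0$ to relax the feasible set and obtain ``$\ge$'' from Theorem~\ref{thm:main-SPD:min:sep:+} and Corollary~\ref{cor:main-SPD:min:sep:-}, and then, in the diagonalizable case, recouple by choosing $X_\pm Q_\pm$ to be the $B$-orthogonal eigenvectors for $\lambda_i^\pm$, so feasibility of $X=[X_+,X_-]$ is automatic and equality is attained. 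All of that matches the paper.

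Where you diverge is the non-diagonalizable case, and the obstacle you flag is real but is avoided entirely by the paper. You propose to argue directly that near-minimizers can be localized inside $B$-definite, $B$-orthogonal invariant subspaces $\mathcal{X}_\pm$; this is delicate precisely because when $m_0>0$ the ``boundary'' eigenvalues $\lambda_1^\pm=\cdots=\lambda_{m_0}^\pm=\lambda_0$ come from the $2\times 2$ Jordan-type blocks $\Lambda_{\rm b}-\lambda J_{\rm b}$, and the associated root subspaces do not split into a $B$-positive and a $B$-negative part that are invariant, so extracting the localization you want from the canonical form is genuinely nontrivial. The paper sidesteps this by the perturbation trick already used at the end of the proof of Theorem~\ref{thm:main-SPD:min:sep:+}: replace $A$ by $A+\varepsilon E$ with $E\succeq 0$ chosen so that the perturbed pencil is diagonalizable and still positive semi-definite. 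Since $D\succeq 0$ and $E\succeq 0$, the perturbed infimum dominates the unperturbed one, while the diagonalizable case gives the perturbed infimum as $\sum\omega_i^+\lambda_i^+(\varepsilon)-\sum\omega_i^-\lambda_i^-(\varepsilon)$; letting $\varepsilon\to 0^+$ and combining with your unconditional lower bound closes the equality without ever touching the non-diagonalizable structure. I would recommend you replace your invariant-subspace localization sketch with this limiting argument, both because it is shorter and because it avoids the step you yourself identified as the main risk.
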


One comment that we would like to emphasize about the conditions of Corollary~\ref{cor:main-SPD:min:sep} is that matrix $D$ has to take the same block-diagonal structure as $J_k$. Our next example shows that if $D$
doesn't have the same block-diagonal structure as $J_k$, the infimum may not be able to be expressed simply as some sum of
the products of the eigenvalues between $D$ and $A-\lambda B$.
The example involves a fair amount of complicated computation and can be skipped in the first reading.

Consider, given $\mu$ and $\delta$ such that $0<\delta<\frac{1}{\mu}<1<\mu$,
\begin{gather*}
A=\begin{bmatrix}
	1 & \\ & \mu\\
	\end{bmatrix},\,\,
B=\begin{bmatrix}
	1 & \\ & -1\\
	\end{bmatrix},\,\, J_2=B, \\
\Omega=\begin{bmatrix}
		1 & \\ & \delta
	\end{bmatrix}\succ 0, \,\,
Q=\begin{bmatrix}
	\sqrt{1-\sigma^2} & -\sigma\\
	\sigma & \sqrt{1-\sigma^2}
	\end{bmatrix},\,\,
D=Q^{\HH}\Omega Q\succ 0,
\end{gather*}
where  $ 0\ne\sigma\in(-1,1)$.
$A-\lambda B$ is positive definite pencil because $A-0\cdot B=A\succ 0$.
The two eigenvalues of $A-\lambda B$ are
$\lambda_1^-=-\mu$, $\lambda_1^+=1$.
$D$ doesn't have the same block structure as $J_2$. We will show that
\begin{equation}\label{eq:eg-arg-1}
\inf_{X^{\HH}BX=J_2}\trace(DX^{\HH}AX)<\min\{1+\delta\mu,\mu+\delta\}=1+\delta\mu,
\end{equation}
implying that the infimum cannot be simply expressed as any of the
two possible sums of products:$1+\delta\mu$ and $\mu+\delta$, between the eigenvalues
of $A-\lambda B$ and of $D$.

To see \eqref{eq:eg-arg-1}, we restrict $X$ to those $Y$:
$$
Y=\begin{bmatrix}
	\sqrt{1+\tau^2} & \tau\\
	\tau & \sqrt{1+\tau^2}
	\end{bmatrix}, \quad\tau\in[0,+\infty).
$$
It can be verified that $Y^{\HH}BY=J_2$.
In order to show \eqref{eq:eg-arg-1},
it suffices to show that $f(\sigma,\tau):=\trace(DY^{\HH}AY)<1+\delta\mu$ for some
$(\sigma,\tau)$.
We have
\begin{align*}
f(\sigma,\tau)&=\trace(DY^{\HH}AY)=\trace(Q^{\HH}\Omega QY^{\HH}AY) \\
	&=\trace\left(\begin{bmatrix}
		             1-\sigma^2+\delta\sigma^2 & (\delta-1)\sigma\sqrt{1-\sigma^2}\\
		             (\delta-1)\sigma\sqrt{1-\sigma^2} & \sigma^2+\delta(1-\sigma^2)\\
	               \end{bmatrix}
                   \begin{bmatrix}
	                 (1+\tau^2)+\mu\tau^2 & (1+\mu)\tau\sqrt{1+\tau^2}\\
	                 (1+\mu)\tau\sqrt{1+\tau^2} & \tau^2+\mu(1+\tau^2)\\
                 \end{bmatrix}\right)\\
   &=\big(1-\sigma^2+\delta\sigma^2\big)\cdot\big[(1+\tau^2)+\mu\tau^2\big]
        +2\cdot(\delta-1)\sigma\sqrt{1-\sigma^2}\cdot(1+\mu)\tau\sqrt{1+\tau^2} \\
   &\qquad +\big[ \sigma^2+\delta(1-\sigma^2)\big]\cdot\big[ \tau^2+\mu(1+\tau^2)\big] \\
   &=1+\delta\mu -\sigma^2(1-\delta)(1-\mu)+\tau^2(1+\delta)(1+\mu)
      +2(\delta-1)(1+\mu)\tau\sigma\sqrt{1-\sigma^2}\sqrt{1+\tau^2}\\
   &=1+\delta\mu+ (1+\delta)(1+\mu)\left[\tau^2-\gamma\nu\sigma^2 -2\gamma\tau\sigma\sqrt{1-\sigma^2}\sqrt{1+\tau^2}\right],
\end{align*}
where for the last equality, we set $\gamma:=\frac{1-\delta}{1+\delta}\in(\frac{\mu-1}{\mu+1},1)$ and
$\nu:=\frac{1-\mu}{1+\mu}\in(-1,0)$. In particular,
 $0<-\nu<\gamma<1$. We calculate the partial derivatives:
\begin{align*}
\frac{1}{2}\frac{\partial f(\sigma,\tau)}{\partial \tau}
 &=(1+\delta)(1+\mu)\left[\tau -\gamma\left(\sqrt{1+\tau^2}+\tau\frac{\tau}{\sqrt{1+\tau^2}}\right)\sigma\sqrt{1-\sigma^2}\right]\\
 &=(1+\delta)(1+\mu)\left[\tau -\gamma\frac{1+2\tau^2}{\sqrt{1+\tau^2}}\,\sigma\sqrt{1-\sigma^2}\right], 	\\
\frac{1}{2}\frac{\partial f(\sigma,\tau)}{\partial \sigma}
	&=(1+\delta)(1+\mu)\left[-\gamma\nu\sigma -\gamma\tau\sqrt{1+\tau^2}\left(\sqrt{1-\sigma^2}
                             +\sigma\frac{-\sigma}{\sqrt{1-\sigma^2}}\right)\right]	\\
    &=-(1+\delta)(1+\mu)\gamma\left[\nu\sigma +\tau\sqrt{1+\tau^2}\,\frac{1-2\sigma^2}{\sqrt{1-\sigma^2}}\right].
\end{align*}
The stationary points $(\sigma,\tau)$ of $f$ satisfy
\begin{equation}\label{eq:eg-arg-2}
\frac{1}{2}\frac{\partial f(\sigma,\tau)}{\partial \tau}=0,\quad
\frac{1}{2}\frac{\partial f(\sigma,\tau)}{\partial \sigma}=0.
\end{equation}
It can be seen that  $\tau=0 \iff \sigma=0$ from the system \eqref{eq:eg-arg-2}.
For $\tau\sigma\ne0$, \eqref{eq:eg-arg-2} yields
\[
	\frac{\tau\sqrt{1+\tau^2}}{1+2\tau^2}=\gamma\sigma\sqrt{1-\sigma^2},
	\qquad
	\tau\sqrt{1+\tau^2}=-\frac{\nu\sigma\sqrt{1-\sigma^2}}{1-2\sigma^2},
\]
or, equivalently
\begin{equation}\label{eq:eg-arg-2a}
	(1+2\tau^2)(1-2\sigma^2)=\frac{-\nu}{\gamma},
	\qquad
	\frac{\tau^2(1+\tau^2)}{1+2\tau^2}=-\gamma\nu\frac{\sigma^2(1-\sigma^2)}{1-2\sigma^2}.
\end{equation}
The first equation in \eqref{eq:eg-arg-2a} yields
\begin{equation}\label{eq:eg-arg-3}
\tau^2=-\frac{1}{2}\left[\frac{\nu}{\gamma(1-2\sigma^2)}+1\right],
\end{equation}
and plug it into the second equation in \eqref{eq:eg-arg-2a} to get
\begin{align}	
0&=\gamma\nu\sigma^2(1-\sigma^2)+\frac{-\gamma}{\nu}(1-2\sigma^2)^2\frac{-1}{4}
     \left[\frac{\nu}{\gamma(1-2\sigma^2)}+1\right]\left[1-\frac{\nu}{\gamma(1-2\sigma^2)}\right]
	\nonumber \\
&=\gamma\nu\sigma^2(1-\sigma^2)+\frac{\gamma}{4\nu}\left[(1-2\sigma^2)^2-\frac{\nu^2}{\gamma^2}\right]
	\nonumber \\
&=\frac{1}{4\gamma\nu}\left[\gamma^2-\nu^2-4\gamma^2(1-\nu^2)\sigma^2+4\gamma^2(1-\nu^2)\sigma^4\right],
	\nonumber \\
&=\frac{\gamma(1-\nu^2)}{\nu}\left[\frac{\gamma^2-\nu^2}{4\gamma^2(1-\nu^2)}-\sigma^2+\sigma^4\right].
   \label{eq:eg-arg-3a}
\end{align}
Since $(-1)^2-4\frac{\gamma^2-\nu^2}{4\gamma^2(1-\nu^2)}=\frac{\nu^2(1-\gamma^2)}{\gamma^2(1-\nu^2)}>0$,
solving \eqref{eq:eg-arg-3a} for $\sigma^2$ gives
$$
\sigma^2=\frac{1}{2}\left(1\pm\sqrt{1-\frac{\gamma^2-\nu^2}{\gamma^2(1-\nu^2)}}\right)
   =\frac{1}{2}\left(1\pm\sqrt{\frac{\nu^2(1-\gamma^2)}{\gamma^2(1-\nu^2)}}\right),
$$
which yields $1-2\sigma^2=\mp\left|\frac{\nu}{\gamma}\right|\sqrt{\frac{1-\gamma^2}{1-\nu^2}}$
and $1+2\tau^2=\pm\sign(\gamma\nu)\sqrt{\frac{1-\nu^2}{1-\gamma^2}}$ by \eqref{eq:eg-arg-3}.
Since $1+2\tau^2>0$, we conclude
$$
1+2\tau^2=\sqrt{\frac{1-\nu^2}{1-\gamma^2}}, \quad
1-2\sigma^2=-\frac{\nu}{\gamma}\sqrt{\frac{1-\gamma^2}{1-\nu^2}}.
$$
Hence stationary points $(\sigma_*,\tau_*)$ are determined by
\begin{equation}\label{eq:eg-arg-4'}
	\tau_*^2=\frac{1}{2}\left(\sqrt{\frac{1-\nu^2}{1-\gamma^2}}-1\right)>0,
	\qquad
	\sigma_*^2=\frac{1}{2}\left(\frac{\nu}{\gamma}\sqrt{\frac{1-\gamma^2}{1-\nu^2}}+1\right)\in(0,1),
\end{equation}
giving two stationary points $(\sigma_*,\tau_*)$:
\begin{equation}\label{eq:eg-arg-4}
	\tau_*=\sqrt{\frac{1}{2}\left(\sqrt{\frac{1-\nu^2}{1-\gamma^2}}-1\right)},
	\qquad
	\sigma_*^{\pm}=\pm\sqrt{\frac{1}{2}\left(\frac{\nu}{\gamma}\sqrt{\frac{1-\gamma^2}{1-\nu^2}}+1\right)}.
\end{equation}
The values of $f$ at  these stationary points are
\begin{align*}
	f(\sigma_*^{\pm},\tau_*)
  &=1+\delta\mu	+ \frac{1}{2}(1+\delta)(1+\mu)\times \\
  &\qquad \left[
		\nu\gamma-1+\sqrt{\frac{1-\nu^2}{1-\gamma^2}}-\nu^2\sqrt{\frac{1-\gamma^2}{1-\nu^2}}
	\mp\gamma\sqrt{\left(1-\frac{\nu^2}{\gamma^2}\frac{1-\gamma^2}{1-\nu^2}\right)
                   \left(1-\frac{1-\nu^2}{1-\gamma^2}\right)}\right]		\\
  &=1+\delta\mu	+ \frac{1}{2}(1+\delta)(1+\mu)\times \\
  &\qquad\left[
		\nu\gamma-1+\sqrt{\frac{1-\nu^2}{1-\gamma^2}}-\nu^2\sqrt{\frac{1-\gamma^2}{1-\nu^2}}
		\mp\sqrt{\frac{(\nu^2-\gamma^2)^2}{(1-\nu^2)(1-\gamma^2)}}\right]		\\
  &\hspace{3cm}\qquad(\text{let $\eta=\sqrt{\frac{1-\gamma^2}{1-\nu^2}}$})		\\
  &=1+\delta\mu	+ \frac{1}{2}(1+\delta)(1+\mu)\left[
		\nu\gamma-1+\frac{1}{\eta}-\nu^2\eta
		\mp\frac{\nu^2-\gamma^2}{(1-\nu^2)\eta}\right]		\\
  &=1+\delta\mu	+ \frac{1}{2}(1+\delta)(1+\mu)\left[
			\nu\gamma-1+(1-\nu^2)\eta+\frac{1-\eta^2}{\eta}
		\mp\frac{\eta^2-1}{\eta}\right]		\\
  &=1+\delta\mu	+ \frac{1}{2}(1+\delta)(1+\mu)\left[
			\nu\gamma-1+\sqrt{(1-\gamma^2)(1-\nu^2)}+\frac{1-\eta^2}{\eta}(1\pm1)
		\right]		\\
  &=1+\delta\mu	+ \frac{1}{2}(1+\delta)(1+\mu)\left[
			-2\frac{\delta+\mu-2\sqrt{\delta\mu}}{(1+\delta)(1+\mu)}+\frac{1-\eta^2}{\eta}(1\pm 1)
		\right]
		.
\end{align*}
It can be seen that  $f(\sigma_*^-,\tau_*)<f(0,0)$ because
$\delta+\mu-2\sqrt{\delta\mu}=(\sqrt{\delta}-\sqrt{\mu})^2>0$. This verifies \eqref{eq:eg-arg-1}.

\section{Proof of Theorem~\ref{thm:main-SPD:min}}\label{sec:thm:main-SPD:min:pf}
We start with three lemmas as preparation. The first lemma is about
a result from majorization \cite{bhat:1996,hojo:2013}.
Given two sets of real numbers $\{\alpha_i\}_{i=1}^m$ and
$\{\beta_i\}_{i=1}^m$, we say that $\{\beta_i\}_{i=1}^m$
{\em majorizes\/} $\{\alpha_i\}_{i=1}^m$ if
$$
\sum_{i=1}^j\alpha_i^{\downarrow}\le \sum_{i=1}^j\beta_i^{\downarrow}, \quad
\mbox{for $j=1,2,\ldots,m$}
$$
with equality holds for $j=m$,
where $\{\alpha_i^{\downarrow}\}_{i=1}^m$ is from re-ordering
$\{\alpha_i\}_{i=1}^m$ in the decreasing order, i.e.,
$$
\alpha_1^{\downarrow}\ge\alpha_2^{\downarrow}\ge\cdots\ge\alpha_m^{\downarrow}
$$
(similarly for $\{\beta_i^{\downarrow}\}_{i=1}^m$). We also use
notation $\alpha_i^{\uparrow}$ obtained from re-ordering $\{\alpha_i\}_{i=1}^m$
as well but in the increasing order.

\begin{lemma}\label{lm:YYZZ}
Let $\gamma_1\ge\gamma_2\ge\cdots\ge\gamma_m$. If
$\{\beta_i\}_{i=1}^m$
majorizes $\{\alpha_i\}_{i=1}^m$, then
\begin{equation}\label{eq:lm-ineq}
\sum_{i=1}^m\gamma_i\beta_i^{\uparrow}\le
\sum_{i=1}^m\gamma_i\alpha_i\le\sum_{i=1}^m\gamma_i\beta_i^{\downarrow}.
\end{equation}
Furthermore, if all $\gamma_i$ are distinct, then the first inequality becomes an equality if and only if $\alpha_i=\beta_i^{\uparrow}$ for all $i$.
Similarly, if all $\gamma_i$ are distinct, then the second inequality becomes an equality if and only if $\alpha_i=\beta_i^{\downarrow}$ for all $i$.
\end{lemma}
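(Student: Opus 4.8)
The plan is to prove Lemma~\ref{lm:YYZZ} by reducing it to the classical fact that the set of doubly stochastic matrices is the convex hull of the permutation matrices (Birkhoff's theorem), together with the linear-programming observation that a linear functional on a polytope attains its extrema at vertices. First I would record the standard characterization: $\{\beta_i\}_{i=1}^m$ majorizes $\{\alpha_i\}_{i=1}^m$ if and only if the vector $\alpha=(\alpha_1,\dots,\alpha_m)^{\T}$ equals $S\beta$ for some doubly stochastic matrix $S$, where $\beta=(\beta_1,\dots,\beta_m)^{\T}$ (this is the Hardy--Littlewood--P\'olya theorem; see \cite{bhat:1996,hojo:2013}). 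Hence $\sum_i\gamma_i\alpha_i=\gamma^{\T}S\beta$ with $\gamma=(\gamma_1,\dots,\gamma_m)^{\T}$, and the quantity to be bounded is a linear function of $S$ over the Birkhoff polytope.

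Next I would invoke Birkhoff's theorem to write $S=\sum_p c_p P_p$, a convex combination of permutation matrices $P_p$, so that $\gamma^{\T}S\beta=\sum_p c_p\,\gamma^{\T}P_p\beta=\sum_p c_p\sum_i\gamma_i\beta_{p(i)}$. Thus $\sum_i\gamma_i\alpha_i$ is a convex combination of the numbers $\sum_i\gamma_i\beta_{p(i)}$ over permutations $p$, and therefore lies between $\min_p\sum_i\gamma_i\beta_{p(i)}$ and $\max_p\sum_i\gamma_i\beta_{p(i)}$. The final ingredient is the elementary rearrangement inequality: since $\gamma_1\ge\cdots\ge\gamma_m$, the sum $\sum_i\gamma_i\beta_{p(i)}$ is maximized when $\beta_{p(i)}$ is arranged in decreasing order, i.e.\ equals $\sum_i\gamma_i\beta_i^{\downarrow}$, and minimized when arranged in increasing order, i.e.\ equals $\sum_i\gamma_i\beta_i^{\uparrow}$. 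Combining these gives \eqref{eq:lm-ineq}.

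For the equality characterizations, suppose all $\gamma_i$ are distinct and the second inequality in \eqref{eq:lm-ineq} is an equality. Then the convex combination $\sum_p c_p\sum_i\gamma_i\beta_{p(i)}$ attains its maximum value $\sum_i\gamma_i\beta_i^{\downarrow}$, which forces $c_p=0$ for every permutation $p$ with $\sum_i\gamma_i\beta_{p(i)}<\sum_i\gamma_i\beta_i^{\downarrow}$. I would then argue via the strict rearrangement inequality that when the $\gamma_i$ are strictly decreasing, $\sum_i\gamma_i\beta_{p(i)}=\sum_i\gamma_i\beta_i^{\downarrow}$ holds precisely when $p$ permutes only indices on which $\beta$ is constant; for all such $p$ the vector $P_p\beta$ equals $\beta^{\downarrow}$ (after the fixed reindexing that sorts $\beta$), so $\alpha=S\beta$ is an average of copies of $\beta^{\downarrow}$ and hence $\alpha_i=\beta_i^{\downarrow}$ for all $i$. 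The converse (that $\alpha_i=\beta_i^{\downarrow}$ gives equality) is immediate by substitution. The argument for the first inequality is symmetric, replacing $\downarrow$ by $\uparrow$ throughout (or applying the already-proved case to $-\gamma$).

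The main obstacle is the equality case: one must be careful that ties among the $\beta_i$ are handled correctly, since then several permutations $p$ achieve the extremal value of $\sum_i\gamma_i\beta_{p(i)}$, yet all of them still produce the same sorted vector $\beta^{\downarrow}$ (resp.\ $\beta^{\uparrow}$), so the conclusion $\alpha=\beta^{\downarrow}$ (resp.\ $\alpha=\beta^{\uparrow}$) survives. Making the "strict rearrangement inequality with distinct $\gamma_i$" step precise — identifying exactly which permutations are extremal and checking they all fix $\beta^{\downarrow}$ — is the only delicate part; everything else is a direct assembly of Birkhoff's theorem, the Hardy--Littlewood--P\'olya characterization, and the rearrangement inequality.
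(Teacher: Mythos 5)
Your proof is correct, and it takes a genuinely different route from the paper's. The paper proves the inequality \eqref{eq:lm-ineq} by Abel summation: after a shift $\gamma_i\mapsto\gamma_i+\xi$ (valid since $\sum\alpha_i=\sum\beta_i$) to make all $\gamma_i$ positive, it writes $\sum_i\gamma_i\alpha_i=s_m\gamma_m+\sum_{i=1}^{m-1}s_i(\gamma_i-\gamma_{i+1})$ with $s_j=\sum_{i\le j}\alpha_i$ and compares with the analogous expressions for the partial sums of $\beta^{\uparrow}$ and $\beta^{\downarrow}$, using the majorization inequalities $p_j\le s_j\le t_j$ and the nonnegativity of $\gamma_i-\gamma_{i+1}$; when the $\gamma_i$ are distinct the gaps $\gamma_i-\gamma_{i+1}$ are strictly positive, forcing $s_i=t_i$ for all $i$ and hence $\alpha_i=\beta_i^{\downarrow}$. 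Your route instead invokes the Hardy--Littlewood--P\'olya characterization of majorization via doubly stochastic matrices, Birkhoff's theorem, and the rearrangement inequality with its strict version. Both are sound; the paper's argument is more elementary and self-contained (it does not need Birkhoff or HLP), while yours is more structural and makes transparent \emph{why} the extremes of the linear functional over the admissible $\alpha$'s occur at permutations of $\beta$ --- the functional is linear over the Birkhoff polytope, so it is extremized at vertices --- and it sidesteps the shifting trick since convexity handles arbitrary-sign $\gamma_i$ automatically. Your treatment of ties in $\beta$ for the equality case (noting that every maximizing permutation still produces the same sorted vector $\beta^{\downarrow}$, so $\alpha$ is forced to equal it) is the right observation and the only place where care is needed; the paper's summation-by-parts proof does not encounter this issue at all because it never isolates individual permutations.
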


The first part of the lemma is exactly the same as \cite[Lemma 2.3]{li:2004c}, except that here it is not required
that all $\gamma_i\ge 0$.
 The second part on the inequalities becoming
equalities was not explicitly stated there, but it follows from the proof there straightforwardly. This lemma likely appeared elsewhere but an explicit reference is hard to find.
As a corollary, we have
$$
\sum_{i=1}^m\gamma_i\beta_i^{\uparrow}
\le\sum_{i=1}^m\gamma_i\beta_i
\le\sum_{i=1}^m\gamma_i\beta_i^{\downarrow}
$$
because clearly $\{\beta_i\}_{i=1}^m$ majorizes $\{\beta_i\}_{i=1}^m$ itself.

\begin{proof}[Proof of Lemma~\ref{lm:YYZZ}]
Without loss of generality, we may assume $\gamma_m> 0$; otherwise
we can always pick a scalar $\xi$ such that $\gamma_m+\xi\ge 0$, and let
$$
\tilde\gamma_i:=\gamma_i+\xi> 0\quad\mbox{for $1\le i\le m$}.
$$
By assumption, we have $\sum_{i=1}^m\alpha_i=\sum_{i=1}^m\beta_i=\sum_{i=1}^m\beta_i^{\uparrow}=\sum_{i=1}^m\beta_i^{\downarrow}=:\eta$, and thus
$$
\sum_{i=1}^m\gamma_i\beta_i^{\uparrow}=-\xi\eta+\sum_{i=1}^m\tilde\gamma_i\beta_i^{\uparrow},\,\,
\sum_{i=1}^m\gamma_i\alpha_i=-\xi\eta+\sum_{i=1}^m\tilde\gamma_i\alpha_i,\,\,
\sum_{i=1}^m\gamma_i\beta_i^{\downarrow}=-\xi\eta+\sum_{i=1}^m\tilde\gamma_i\beta_i^{\downarrow}.
$$
It suffices to prove the lemma for $\tilde\gamma_1\ge\tilde\gamma_2\ge\cdots\ge\tilde\gamma_m> 0$, instead.

The argument below up to \eqref{ineq:2} appears in the proof of \cite[Lemma 2.3]{li:2004c}. It is repeated here for
the purpose of arguing when the equality signs in \eqref{eq:lm-ineq} are attained.
Suppose $\gamma_1\ge\gamma_2\ge\cdots\ge\gamma_m> 0$ and set
$$
p_j=\sum_{i=1}^j\beta_i^{\uparrow},\quad
s_j=\sum_{i=1}^j\alpha_i, \quad t_j=\sum_{i=1}^j\beta_i^{\downarrow},\quad
p_0=s_0=t_0=0.
$$
Since $\{\beta_i\}_{i=1}^m$ majorizes $\{\alpha_i\}_{i=1}^m$, we have
$$
p_j\le s_j\le t_j,\quad p_m=s_m=t_m
$$
and thus
\begin{eqnarray}
\sum_{i=1}^m\gamma_i\alpha_i&=&\sum_{i=1}^m(s_i-s_{i-1})\gamma_i \nonumber\\
  &=&\sum_{i=1}^ms_i\gamma_i-\sum_{i=2}^ms_{i-1}\gamma_i \nonumber\\
  &=&s_m\gamma_m+\sum_{i=1}^{m-1}s_i(\gamma_i-\gamma_{i+1})\nonumber\\
  &\le&t_m\gamma_m+\sum_{i=1}^{m-1}t_i(\gamma_i-\gamma_{i+1})\nonumber\\
  &=&\sum_{i=1}^m\gamma_i\beta_i^{\downarrow}, \label{ineq:1}\\
\sum_{i=1}^m\gamma_i\alpha_i
      &=&s_m\gamma_m+\sum_{i=1}^{m-1}s_i(\gamma_i-\gamma_{i+1}) \nonumber\\
      &\ge&p_m\gamma_m+\sum_{i=1}^{m-1}p_i(\gamma_i-\gamma_{i+1})\nonumber\\
      &=&\sum_{i=1}^m\gamma_i\beta_i^{\uparrow}, \label{ineq:2}
\end{eqnarray}
as required. To figure out when any of the inequality in the lemma is an equality, we look at \eqref{ineq:1}, for an example. We notice that there is only one inequality sign during the derivation in \eqref{ineq:1}. In order for the inequality to become an equality,
assuming all $\gamma_i$ are distinct, we will have to have $s_i=t_i$ for all $i$ and consequently, $\alpha_i=\beta_i^{\downarrow}$ for all $i$.
\end{proof}

The next two lemmas relate the diagonal entries of a Hermitian matrix with its eigenvalues.

\begin{lemma}[{\cite[Exercise II.1.12, p.35]{bhat:1996}}]\label{lm:diag-by-eig}
The multiset of the diagonal entries of
a Hermitian matrix is majorized by the multiset of its eigenvalues.
\end{lemma}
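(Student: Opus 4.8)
The plan is to reduce the statement to Ky Fan's trace minimization principle \eqref{eq:min-trace-Herm}, taken in its maximization form (which the introduction already notes is obtained by replacing $A$ with $-A$). Write $A=(a_{ij})\in\bbC^{n\times n}$ for the Hermitian matrix in question, with eigenvalues ordered ascendingly as $\lambda_1\le\cdots\le\lambda_n$ as in \eqref{eq:lambda-ascending}, and let $a_{i_1i_1}\ge a_{i_2i_2}\ge\cdots\ge a_{i_ni_n}$ be its diagonal entries rearranged in decreasing order. By the definition of majorization recalled just before Lemma~\ref{lm:YYZZ}, it suffices to show
$$
\sum_{j=1}^k a_{i_ji_j}\ \le\ \sum_{j=1}^k\lambda_{n+1-j}\qquad\text{for }k=1,\dots,n,
$$
with equality when $k=n$.

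First I would dispose of the case $k=n$: it is simply the identity $\sum_{j}a_{jj}=\trace(A)=\sum_{j}\lambda_j$, so equality holds there. For a general $k$, let $\mathcal I=\{i_1,\dots,i_k\}$ be the index set of the $k$ largest diagonal entries and let $X\in\bbC^{n\times k}$ be the matrix whose columns are the standard basis vectors $e_i$, $i\in\mathcal I$ (in any order). Then $X^{\HH}X=I_k$ and $\trace(X^{\HH}AX)=\sum_{i\in\mathcal I}a_{ii}=\sum_{j=1}^k a_{i_ji_j}$. Since the maximization restatement of \eqref{eq:min-trace-Herm} gives $\max_{X^{\HH}X=I_k}\trace(X^{\HH}AX)=\sum_{i=n-k+1}^n\lambda_i=\sum_{j=1}^k\lambda_{n+1-j}$, this particular $X$ yields the required inequality at index $k$. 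Combining the partial-sum inequalities with the $k=n$ equality gives exactly that the diagonal multiset is majorized by the eigenvalue multiset.

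There is essentially no serious obstacle here — this is a classical Schur-type fact — so the write-up is short; the only point requiring care is the bookkeeping between the ascending ordering of the $\lambda_i$ adopted in the paper and the descending ordering implicit in the definition of majorization. If one prefers an argument not quoting \eqref{eq:min-trace-Herm}, an alternative route is to write $A=V\Lambda V^{\HH}$ with $V$ unitary and $\Lambda=\diag(\lambda_1,\dots,\lambda_n)$, observe that $a_{jj}=\sum_i|v_{ji}|^2\lambda_i$ where $S=(|v_{ji}|^2)$ is doubly stochastic, and invoke the Hardy--Littlewood--P\'olya/Birkhoff characterization that $S$ applied to the eigenvalue vector is majorized by that vector; but since \eqref{eq:min-trace-Herm} is already available, the first route is the most economical.
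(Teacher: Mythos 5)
Your proof is correct. The paper does not prove this lemma — it cites it directly to Bhatia's Exercise II.1.12 — so there is no in-paper argument to compare against. Your route is a standard textbook derivation: choose $X$ to be the $k$ standard basis vectors $e_i$ corresponding to the $k$ largest diagonal entries, observe that $\trace(X^{\HH}AX)$ is exactly the sum of those entries, bound it above by the maximization form of Ky Fan's principle (the $-A$ version of \eqref{eq:min-trace-Herm}), and close with the trace identity at $k=n$; the bookkeeping against the paper's ascending ordering of eigenvalues and the definition of majorization stated just before Lemma~\ref{lm:YYZZ} is handled correctly. The doubly-stochastic/Birkhoff argument you mention as an alternative is the other classical proof (essentially Schur's theorem), but since \eqref{eq:min-trace-Herm} is already in the paper's toolkit, your first route is the more economical self-contained choice.
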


\begin{lemma}\label{lm:diag=eig}
For a Hermitian matrix, if the multiset of its diagonal entries is the same as the multiset of its  eigenvalues, then it is diagonal.
\end{lemma}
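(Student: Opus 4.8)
The plan is to argue by induction on the size $n$ of the Hermitian matrix $H$, using the equality case of the majorization inequality between diagonal entries and eigenvalues (Lemma~\ref{lm:diag-by-eig}), together with a positivity argument that forces off-diagonal blocks to vanish. The base case $n=1$ is trivial. For the inductive step, write the eigenvalues of $H$ as $\lambda_1\ge\cdots\ge\lambda_n$ and the diagonal entries, reordered decreasingly, as $d_1^{\downarrow}\ge\cdots\ge d_n^{\downarrow}$; by hypothesis $d_i^{\downarrow}=\lambda_i$ for all $i$.

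First I would permute rows and columns of $H$ simultaneously (a unitary congruence that preserves both the eigenvalue multiset and the diagonal multiset) so that the $(1,1)$ entry equals $\lambda_1=d_1^{\downarrow}$, the largest eigenvalue. Partition
$$
H=\begin{bmatrix} \lambda_1 & h^{\HH} \\ h & H' \end{bmatrix},
$$
where $H'\in\bbC^{(n-1)\times(n-1)}$ is Hermitian. The key step is to show $h=0$. For this I would use the extremal (Rayleigh-quotient) characterization of $\lambda_1$: since $\lambda_1=\max_{\|x\|=1}x^{\HH}Hx$ and this maximum is attained at $e_1$ (because $e_1^{\HH}He_1=\lambda_1$), the vector $e_1$ is an eigenvector of $H$ for $\lambda_1$, i.e.\ $He_1=\lambda_1 e_1$, which forces $h=0$ directly. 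Hence $H=\lambda_1\oplus H'$, so the eigenvalues of $H'$ are $\lambda_2\ge\cdots\ge\lambda_n$ and its diagonal entries are the remaining $n-1$ diagonal entries of $H$, whose decreasing reordering is $\lambda_2\ge\cdots\ge\lambda_n$ as well. Thus $H'$ again satisfies the hypothesis, and by the inductive hypothesis $H'$ is diagonal, whence $H$ is diagonal.

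The one point that needs a little care — and which I expect to be the main (minor) obstacle — is justifying that $e_1$ is genuinely a maximizer of the Rayleigh quotient rather than merely attaining the value $\lambda_1$: this is immediate since $e_1^{\HH}He_1=\lambda_1$ equals the global maximum $\lambda_1$, so $e_1$ maximizes $x^{\HH}Hx$ over the unit sphere, and a maximizer of the Rayleigh quotient is necessarily an eigenvector for the top eigenvalue. An alternative, perhaps cleaner, route avoiding eigenvector arguments is to invoke the equality condition already recorded in Lemma~\ref{lm:YYZZ} (or, more elementarily, to expand $\trace(H^2)=\sum_{i,j}|H_{ij}|^2=\sum_i d_i^2 + \sum_{i\ne j}|H_{ij}|^2$ and compare with $\trace(H^2)=\sum_i\lambda_i^2$): since the diagonal multiset equals the eigenvalue multiset we get $\sum_{i\ne j}|H_{ij}|^2 = \sum_i\lambda_i^2 - \sum_i d_i^2 = 0$, so every off-diagonal entry vanishes and $H$ is diagonal outright, with no induction needed. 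I would present this trace argument as the main proof and mention the inductive argument as a remark.
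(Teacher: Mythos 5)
Your main argument (comparing $\trace(H^2)=\sum_{i,j}|H_{ij}|^2$ with $\sum_i\lambda_i^2$ to force all off-diagonal entries to vanish) is exactly the paper's proof, which uses the Frobenius norm in the same way, and it is correct. The inductive/Rayleigh-quotient route you sketch as a remark is also sound but unnecessary once the trace identity is in hand.
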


\begin{proof}
This lemma is probably known, but we could not find a reference to it. For completeness, we provide a quick proof.
Let $A=[a_{ij}]\in\bbC^{n\times n}$ be such a Hermitian matrix with eigenvalues $\{\lambda_i\}_{i=1}^n$. By the assumption,
$$
\|A\|_{\F}^2=\sum_{i,j=1}^n|a_{ij}|^2=\sum_{i=1}^n|\lambda_i|^2=\sum_{i=1}^n|a_{ii}|^2,
$$
where $\|A\|_{\F}$ denotes the Frobenius norm of $A$. Hence $|a_{ij}|^2=0$ for all $i\ne j$, as expected.
\end{proof}


Now we are ready to prove Theorem~\ref{thm:main-SPD:min}.


\begin{proof}[Proof of Theorem~\ref{thm:main-SPD:min}]
Recall the  eigen-decomposition \eqref{eq:AB-eigD:SPD} with \eqref{eq:lambda-ascending} for $A-\lambda B$
and the eigen-decomposition \eqref{eq:eigD4D} for $D$. Consider first that $D$ is nonsingular, i.e.,
all $\omega_i\ne 0$.

Introducing
\begin{equation}\label{eq:main-SPD:min:pf-1}
Y=U^{-1}XQ
\quad\Rightarrow\quad
X=UY Q^{\HH},
\end{equation}
we find that the left-hand side of \eqref{eq:main-SPD:min} can be transformed to
$$
\min_{X^{\HH}BX=I_k}\trace(DX^{\HH}AX)=\min_{Y^{\HH}Y=I_k}\trace(\Omega Y^{\HH}\Lambda Y),
$$
and any minimizer of one yield a  minimizer of the other according to \eqref{eq:main-SPD:min:pf-1}.

For any given $Y\in\bbC^{n\times k}$ with $Y^{\HH}Y=I_k$, denote the eigenvalues of $Y^{\HH}\Lambda Y$ by
$$
\mu_1\le\mu_2\le\cdots\le\mu_k,
$$
where we suppress the dependency of  $\mu_i$ on $Y$ for clarity.
Cauchy's interlacing inequalities say that
\begin{equation}\label{eq:Cauchy}
\mbox{$\lambda_{i+n-k}\ge\mu_i\ge\lambda_i$ for all $1\le i\le k$}.
\end{equation}
Denote the diagonal entries of $Y^{\HH}\Lambda Y$ by $(Y^{\HH}\Lambda Y)_{ii}$ for $i=1,2,\ldots, k$, and let
$\alpha_i$ be the reordering of $(Y^{\HH}\Lambda Y)_{ii}$ in the increasing order, i.e.,
$$
\alpha_1\le\alpha_2\le\cdots\le\alpha_k.
$$
Evidently,
$\{(Y^{\HH}\Lambda Y)_{ii}\}_{i=1}^k$ is majorized by $\{\alpha_i\}_{i=1}^k$ because they are the same up to a permutation.
By Lemma~\ref{lm:diag-by-eig}, $\{\alpha_i\}_{i=1}^k$ is majorized by $\{\mu_i\}_{i=1}^k$.
We have
\begin{align}
\trace(\Omega Y^{\HH}\Lambda Y)&=\sum_{i=1}^k\omega_i(Y^{\HH}\Lambda Y)_{ii} \quad\mbox{(use Lemma~\ref{lm:YYZZ})}\nonumber \\
      &\ge\sum_{i=1}^k\omega_i\alpha_i \quad\mbox{(use Lemma~\ref{lm:YYZZ})}\nonumber\\
      &\ge\sum_{i=1}^k\omega_i\mu_i \nonumber\\
      &=\sum_{i=1}^{\ell}\omega_i\mu_i+\sum_{i=\ell+1}^k\omega_i\mu_i \quad\mbox{(use \eqref{eq:Cauchy})} \nonumber\\
      &\ge\sum_{i=1}^{\ell}\omega_i\lambda_i+\sum_{i=\ell+1}^k\omega_i\lambda_{i+n-k}, \label{eq:thm1:pf-2}
\end{align}
Since $Y$ is arbitrary, we have
\begin{equation}\label{eq:tmm1-1}
\min_{Y^{\HH}Y=I_k}\trace(\Omega Y^{\HH}\Lambda Y)\ge\sum_{i=1}^{\ell}\omega_i\lambda_i+\sum_{i=\ell+1}^k\omega_i\lambda_{i+n-k}.
\end{equation}
It is not too hard to find a particular $Y$ such that
$\trace(\Omega Y^{\HH}\Lambda Y)$ is equal to the right-hand side of \eqref{eq:tmm1-1}.
Therefore we have \eqref{eq:main-SPD:min}.

Suppose now all $\omega_i$ are distinct and $Y_{\opt}$ is a minimizer. We must have
\begin{align}
\trace(\Omega Y_{\opt}^{\HH}\Lambda Y_{\opt})&=\sum_{i=1}^k\omega_i(Y_{\opt}^{\HH}\Lambda Y_{\opt})_{ii} \nonumber \\
       &=\sum_{i=1}^k\omega_i\alpha_i \label{eq:tmm1-2a}\\
       &=\sum_{i=1}^k\omega_i\mu_i \label{eq:tmm1-2b}\\
       &=\sum_{i=1}^{\ell}\omega_i\lambda_i+\sum_{i=\ell+1}^k\omega_i\lambda_{i+n-k}, \label{eq:tmm1-2c}
\end{align}
where $\alpha_1\le\alpha_2\le\cdots\le\alpha_k$ are the reordering of $(Y_{\opt}^{\HH}\Lambda Y_{\opt})_{ii}$,
and $\mu_1\le\mu_2\le\cdots\le\mu_k$ are the eigenvalues of $Y_{\opt}^{\HH}\Lambda Y_{\opt}$.
For the equalities in \eqref{eq:tmm1-2a} -- \eqref{eq:tmm1-2c} to hold, we must have for all $i$
\begin{align*}
&(Y_{\opt}^{\HH}\Lambda Y_{\opt})_{ii}=\alpha_i=\mu_i=\lambda_i\quad\mbox{for $1\le i\le \ell$}, \\
&(Y_{\opt}^{\HH}\Lambda Y_{\opt})_{ii}=\alpha_i=\mu_i=\lambda_{n-k+i}\quad\mbox{for $\ell+1\le i\le k$},
\end{align*}
and
$
Y_{\opt}^{\HH}\Lambda Y_{\opt}=\diag(\alpha_1,\alpha_2,\ldots,\alpha_k)
    =\diag(\lambda_1,\lambda_2,\ldots,\lambda_{\ell},\lambda_{n-k+\ell+1},\ldots,\lambda_n).
$
Now use the relation \eqref{eq:main-SPD:min:pf-1} to conclude the proof for the case when $D$ is nonsingular.

Return to the case when $D$ is singular, i.e., some of its eigenvalues $\omega_i=0$. Let $\what Q$ be as the one
defined in item (b) and $\what Q_{\bot}$ be the columns of $Q$ not in $\what Q$.
The eigen-decomposition \eqref{eq:eigD4D} of $D$ can be rewritten as
$$
D=[\what Q,\what Q_{\bot}]\begin{bmatrix}
                          \what\Omega & 0 \\
                          0 & 0
                        \end{bmatrix}[\what Q,\what Q_{\bot}]^{\HH},
$$
where $\what\Omega=\diag(\omega_1,\ldots,\omega_{\ell_+},\omega_{k-\ell_-+1},\ldots,\omega_k)$ of all nonzero eigenvalues of $D$.
It can be verified that
\begin{equation}\label{eq:thm1:pf-3}
\trace(DX^{\HH}AX)=\trace(\what\Omega\what Y^{\HH}A\what Y),
\end{equation}
where $\what Y=X\what Q$, given $X$. If $X^{\HH}BX=I_k$, then $Y^{\HH}BY=\what Q^{\HH}X^{\HH}BX\what Q=\what Q^{\HH}\what Q=I_{\ell_++\ell_-}$.
On the other hand, given $\what Y\in\bbC^{n\times (\ell_++\ell_-)}$ such that $Y^{\HH}BY=I_{\ell_++\ell_-}$, we can expand
it to $Y=[\what Y,\what Y_c]\in\bbC^{n\times k}$ such that $Y^{\HH}BY=I_k$ and then let $X=Y[\what Q,\what Q_{\bot}]^{\HH}$ for which it can be seen that \eqref{eq:thm1:pf-3} holds. This proves
\begin{equation}\label{eq:thm1:pf-4}
\min_{X^{\HH}BX=I_k}\trace(DX^{\HH}AX)=\min_{\what Y^{\HH}B\what Y=I_{\ell_++\ell_-}}\trace(\what\Omega\what Y^{\HH}A\what Y),
\end{equation}
and a maximizer for one leads to the maximizer for the other. The right-hand side of \eqref{eq:thm1:pf-4} is a minimization
problem belonging to
the case of nonsingular $D$ that we just dealt with.
\end{proof}

\section{Proof of Theorem~\ref{thm:main-SPD:min:sep:+}}\label{sec:main-SPD:min:sep:+}
In preparing for the proof of Theorem~\ref{thm:main-SPD:min:sep:+},
we may assume, without loss of generality, that $A\succeq0$;
Otherwise, noticing
\begin{equation}\label{eq:simplify0}
\trace(DX^{\HH}AX)
	=\trace(DX^{\HH}(A-\lambda_0B)X)+\lambda_0\trace(D),
\end{equation}
we may consider $\trace(DX^{\HH}(A-\lambda_0B)X)$, instead.

In what follows, suppose that  $A\succeq 0$.

By \cite[Lemma~3.8]{lilb:2013}, $A-\lambda B$ admits an eigen-decomposition as follows. There exists a nonsingular matrix $U\in \bbC^{n\times n}$ such that
\begin{subequations}\label{eq:eigen-decomp}
\begin{align}
U^{\HH}AU & = \kbordermatrix{ &\sss n_+-m_0 &\sss n_--m_0 & \sss 2m_0 &n_0 \\
               \sss n_+-m_0 & \Lambda_+ & & & \\
               \sss n_--m_0 & & -\Lambda_- & & \\
               \sss 2m_0 & & & \Lambda_{\rm b} & \\
               \sss n_0 & & & & \Lambda_{\infty}}
             =:\kbordermatrix{ &\sss r & \sss n_0 \\
                  \sss r & \Lambda_r & \\
                  \sss n_0 & & \Lambda_{\infty}}=:\Lambda\succeq0, \label{eq:eigen-decomp-1} \\
U^{\HH}BU & = \kbordermatrix{ &\sss n_+-m_0 &\sss n_--m_0 & \sss 2m_0 &n_0 \\
               \sss n_+-m_0 & I_{n_+-m_0} & & & \\
               \sss n_--m_0 & & -I_{n_--m_0} & & \\
               \sss 2m_0 & & & J_{\rm b} & \\
               \sss n_0  & & & & 0   }
               =:\kbordermatrix{ &\sss r & \sss n_0 \\
                              \sss r & J_r & \\
                  \sss n_0 & & J_{\infty}}=:J_n, \label{eq:eigen-decomp-2}
\end{align}
where $0\le m_0\le\min\{n_+,n_-\}$,
and\footnote {Recall the simplification due to \eqref{eq:simplify0}. In general,
          $\Lambda_0$ in \eqref{eq:eigen-decomp-5} takes the form $\begin{bmatrix}
	                                                          0&\lambda_0\\ \lambda_0&1\\
                                                              \end{bmatrix}$, and thus
          $\lambda_{m_0}^-=\cdots=\lambda_1^-=\lambda_0=\lambda_1^+=\cdots=\lambda_{m_0}^+$.}
\begin{gather}
U=\kbordermatrix{ &\sss n_+-m_0 &\sss n_--m_0 & \sss 2m_0 &n_0 \\
                  & U_+ & U_-       & U_{\rm b}     & U_{\infty} }
                  =:\kbordermatrix{ &\sss r & \sss n_0 \\
                              & U_r&U_{\infty}}, \label{eq:eigen-decomp-3} \\
\Lambda_+=\diag(\lambda_{m_0+1}^+,\dots,\lambda_{n_+}^+),\quad
\Lambda_-=\diag(\lambda_{n_-}^-,\dots,\lambda_{m_0+1}^-), \label{eq:eigen-decomp-4}\\
\Lambda_0=\begin{bmatrix} 0&0\\ 0&1 \end{bmatrix},\quad
             F_2=\begin{bmatrix}
                     0&1\\ 1&0
                 \end{bmatrix},  \label{eq:eigen-decomp-5}\\
J_{\rm b}=\diag(\underbrace{F_2,\dots,F_2}_{m_0}),\quad
\Lambda_{\rm b}=\diag(\underbrace{\Lambda_0,\dots,\Lambda_0}_{m_0}),\quad
\Lambda_{\infty}\succeq 0. \label{eq:eigen-decomp-6}
\end{gather}
\end{subequations}
Both $\Lambda$ in \eqref{eq:eigen-decomp-1} and $J_n$ in \eqref{eq:eigen-decomp-2} are diagonal if $m_0=0$, i.e.,
in the absence of blocks $\Lambda_{\rm b}$, $J_{\rm b}$, and $U_{\rm b}$. For the case, we say that $A-\lambda B$
is {\em diagonalizable}.
It can be seen from \eqref{eq:eigen-decomp} that the finite eigenvalues of $A-\lambda B$ are given by
\begin{align*}
\lambda_{n_-}^-\le\cdots\le\lambda_{m_0+1}^-\le\underbrace{ 0 =\cdots= 0 }_{m_0}
		=\underbrace{ 0 =\cdots= 0 }_{m_0}
         \le\lambda_{m_0+1}^+\le\cdots\le\lambda_{n_+}^+,
\end{align*}
which, compared to \eqref{eq:finite-eig-property}, implies
$\lambda_{m_0}^-=\cdots=\lambda_1^-=0=\lambda_1^+=\cdots=\lambda_{m_0}^+$, and they come from $\Lambda_{\rm b}-\lambda J_{\rm b}$.

Letting $Y=U^{-1}XQ$, we can transform \eqref{eq:min-trace-Herm2} for the case $X^{\HH}BX=I_k$ into
\begin{equation}\label{eq:min-trace-Herm2':+}
\inf_{X^{\HH}BX=I_k}\trace(DX^{\HH}AX)
	=\inf_{Y^{\HH}J_n Y =I_k}\trace(\Omega Y^{\HH}\Lambda Y),
\end{equation}
where $k\le n_+$.  The next two lemmas will be needed in our later proof.

\begin{lemma}[{\cite[Corollary~5.12]{vese:2011}}]\label{lm:complement-basis:+}
Let $J_n=\diag(I_{n_+},-I_{n_-})$ and $n=n_++n_-$. Any vector set $u_1,\dots,u_k$ satisfying
$u_i^{\HH}J_nu_j=\pm \delta_{ij}$ for $i,j=1,\dots,k$ can be complemented to a basis $\{u_1,\dots,u_n\}$ of $\bbC^n$ satisfying $u_i^{\HH}J_nu_j=\pm\delta_{ij}$ for $i,j=1,\dots,n$, where $\delta_{ij}$ is the Kronecker delta which is $1$ for $i=j$ and $0$ otherwise, and the numbers of $1$ and $-1$ among $u_i^{\HH}J_nu_i$ for $1\le i\le n$ are $n_+$ and $n_-$, respectively.
\end{lemma}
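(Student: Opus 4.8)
The plan is to recast Lemma~\ref{lm:complement-basis:+} as a statement about the non-degenerate indefinite inner product $[x,y]:=y^{\HH}J_nx$ on $\bbC^n$ and to manufacture the missing vectors by an orthogonalization adapted to this form, closing with Sylvester's law of inertia to fix the signature. First I would observe that the Gram matrix $G_k:=[\,u_i^{\HH}J_nu_j\,]_{i,j=1}^{k}$ of the given vectors is a diagonal matrix with entries $\pm1$, hence invertible; in particular $U_k:=[u_1,\dots,u_k]$ has full column rank $k$, and the subspace $\mathcal{U}:=\cR(U_k)$ is non-degenerate for $[\cdot,\cdot]$.

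Next I would construct a complementary subspace. Set $\mathcal{U}^{[\perp]}:={\cal N}(U_k^{\HH}J_n)=\{x\in\bbC^n:\ u_i^{\HH}J_nx=0,\ 1\le i\le k\}$. Since $J_n$ is invertible and $U_k$ has rank $k$, the matrix $U_k^{\HH}J_n$ has rank $k$, so $\dim\mathcal{U}^{[\perp]}=n-k$; moreover $\mathcal{U}\cap\mathcal{U}^{[\perp]}=\{0\}$, because $U_kc\in\mathcal{U}^{[\perp]}$ forces $U_k^{\HH}J_nU_kc=G_kc=0$ and hence $c=0$. A dimension count then gives the direct sum $\bbC^n=\mathcal{U}\oplus\mathcal{U}^{[\perp]}$, with every vector of $\mathcal{U}^{[\perp]}$ orthogonal in $[\cdot,\cdot]$ to every vector of $\mathcal{U}$.

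Then I would orthonormalize on the complement. Choose any basis $V=[v_1,\dots,v_{n-k}]$ of $\mathcal{U}^{[\perp]}$ and put $\widehat G:=V^{\HH}J_nV$, a Hermitian matrix; it is invertible, since $\widehat Gc=0$ would make $Vc$ orthogonal in $[\cdot,\cdot]$ both to $\mathcal{U}^{[\perp]}$ and to $\mathcal{U}$, hence to all of $\bbC^n$, forcing $J_nVc=0$ and so $c=0$. By the spectral theorem pick a unitary $R$ with $R^{\HH}\widehat GR=\diag(d_1,\dots,d_{n-k})$, all $d_j\ne0$ real, and set $W:=VR\,\diag(|d_1|^{-1/2},\dots,|d_{n-k}|^{-1/2})=:[u_{k+1},\dots,u_n]$. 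Then $W^{\HH}J_nW=\diag(\sign(d_1),\dots,\sign(d_{n-k}))$ and $U_k^{\HH}J_nW=0$, so $\{u_1,\dots,u_n\}$ satisfies $u_i^{\HH}J_nu_j=\pm\delta_{ij}$ for all $i,j$; the columns of $[U_k,W]$ are linearly independent because $\mathcal{U}\cap\cR(W)\subseteq\mathcal{U}\cap\mathcal{U}^{[\perp]}=\{0\}$, so $\{u_1,\dots,u_n\}$ is a basis of $\bbC^n$. Finally, $[U_k,W]^{\HH}J_n[U_k,W]=\diag(G_k,\,W^{\HH}J_nW)$ is a signature matrix congruent to $J_n$ via the invertible matrix $[U_k,W]$, so by Sylvester's law of inertia it has exactly $n_+$ entries equal to $1$ and $n_-$ entries equal to $-1$; since those diagonal entries are precisely the numbers $u_i^{\HH}J_nu_i$, the asserted count follows.

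I expect the only point needing genuine care to be the structural claim that the $J_n$-orthogonal complement $\mathcal{U}^{[\perp]}$ of the non-degenerate subspace $\mathcal{U}$ is simultaneously a vector-space complement of $\mathcal{U}$ and non-degenerate in its own right; once this is secured (it uses nothing beyond $\det G_k\ne0$ and $\det J_n\ne0$), the remainder is a routine indefinite Gram--Schmidt together with bookkeeping via Sylvester's law. Alternatively, one may simply invoke \cite[Corollary~5.12]{vese:2011}; the sketch above is offered for self-containedness.
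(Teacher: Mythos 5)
Your proof is correct. The paper itself gives no proof of this lemma: it simply cites \cite[Corollary~5.12]{vese:2011} and remarks that the result can also be found in classical monographs such as \cite{malc:1963,golr:2005}. So there is no ``paper's own proof'' to compare against; your sketch fills a gap rather than duplicates one. For the record, the argument as written is sound: the Gram matrix $G_k$ of the given vectors is a signature matrix, hence invertible, which makes $\mathcal{U}=\cR(U_k)$ non-degenerate; the $J_n$-orthogonal complement $\mathcal{U}^{[\perp]}=\mathcal{N}(U_k^{\HH}J_n)$ has dimension $n-k$ and meets $\mathcal{U}$ trivially, giving the direct sum; the restricted Gram matrix $\widehat G=V^{\HH}J_nV$ is invertible by the non-degeneracy argument you give (orthogonality to both summands forces $J_nVc=0$, hence $c=0$); a spectral decomposition plus diagonal scaling produces $W$ with $W^{\HH}J_nW$ a signature matrix and $U_k^{\HH}J_nW=0$; and Sylvester's law of inertia applied to the congruence $[U_k,W]^{\HH}J_n[U_k,W]=\diag(G_k,W^{\HH}J_nW)$ pins down the $n_+$ versus $n_-$ count. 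This is exactly the standard indefinite Gram--Schmidt route one would expect to find in the cited sources, so it is a faithful (and usefully self-contained) substitute for the citation.
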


\begin{lemma}[{\cite[Example~6.3]{vese:2011}}]\label{lm:polar-decomposition:+}
Let $J_n=\diag(I_{n_+},-I_{n_-})$. A matrix $X$ satisfies $X^{\HH}J_nX=J_n$ if and only if it is of the form
\begin{equation}\label{eq:lm:polar-decomposition:+}
	X=\begin{bmatrix}
		(I_{n_+}+ W  W ^{\HH})^{1/2} &  W \\
		 W ^{\HH} & (I_{n_-}+ W ^{\HH} W )^{1/2}\\
		\end{bmatrix}\begin{bmatrix}
		 V_+ & \\ & V_-
	\end{bmatrix},
\end{equation}
where $V_+\in \bbC^{n_+\times n_+}$ and $V_-\in \bbC^{n_-\times n_-}$ are unitary, and $W\in \bbC^{n_+\times n_-}$.
\end{lemma}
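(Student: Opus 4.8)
The plan is to recognize this as the structure theorem for the pseudo-unitary group $\{X\in\bbC^{n\times n}:X^{\HH}J_nX=J_n\}$ and to prove the two implications separately, with one elementary intertwining identity carrying most of the weight in both. First I would note that for every $W\in\bbC^{n_+\times n_-}$ the equality $W(W^{\HH}W)=(WW^{\HH})W$ shows that $W$ intertwines the positive definite matrices $I_{n_-}+W^{\HH}W$ and $I_{n_+}+WW^{\HH}$, hence also intertwines any continuous function of them; applied to the positive square root this gives
\[
W(I_{n_-}+W^{\HH}W)^{1/2}=(I_{n_+}+WW^{\HH})^{1/2}W .
\]

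For the ``if'' part, I would write the asserted $X$ as $HV$, where $H$ is the Hermitian matrix in \eqref{eq:lm:polar-decomposition:+} and $V=\diag(V_+,V_-)$. Since $V^{\HH}J_nV=J_n$ is immediate, it suffices to check $H^{\HH}J_nH=HJ_nH=J_n$, and this is a $2\times2$ block multiplication whose $(1,1)$ and $(2,2)$ entries collapse to $(I+WW^{\HH})-WW^{\HH}=I$ and $W^{\HH}W-(I+W^{\HH}W)=-I$ while the $(1,2)$ and $(2,1)$ entries vanish exactly by the intertwining identity and its conjugate transpose. Then $X^{\HH}J_nX=V^{\HH}(HJ_nH)V=V^{\HH}J_nV=J_n$.

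For the ``only if'' part, assume $X^{\HH}J_nX=J_n$; then $X$ is invertible, so $X^{-1}=J_nX^{\HH}J_n$ and consequently $XJ_nX^{\HH}=J_n$ as well. Partitioning $X=\begin{bmatrix}A&B\\C&D\end{bmatrix}$ conformally with $J_n$, these two relations unpack into the block identities $A^{\HH}A=I+C^{\HH}C$, $A^{\HH}B=C^{\HH}D$, $AA^{\HH}=I+BB^{\HH}$, $DD^{\HH}=I+CC^{\HH}$ (among others), so in particular $A$ and $D$ are invertible. I would take the polar decompositions $A=P_AV_+$, $D=P_DV_-$ with $P_A=(AA^{\HH})^{1/2}$, $P_D=(DD^{\HH})^{1/2}$ Hermitian positive definite and $V_+,V_-$ unitary, and define $W:=V_+C^{\HH}$. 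Short computations then yield $WW^{\HH}=V_+(A^{\HH}A-I)V_+^{\HH}=AA^{\HH}-I=P_A^2-I$, $W^{\HH}W=CC^{\HH}=DD^{\HH}-I=P_D^2-I$, hence $P_A=(I_{n_+}+WW^{\HH})^{1/2}$ and $P_D=(I_{n_-}+W^{\HH}W)^{1/2}$, and also $W^{\HH}V_+=C$, $P_AV_+=A$, $P_DV_-=D$. Finally, using the intertwining identity in the form $WP_D=P_AW$ together with $A^{\HH}B=C^{\HH}D$ gives $B=(A^{\HH})^{-1}C^{\HH}D=P_A^{-1}V_+C^{\HH}P_DV_-=P_A^{-1}(WP_D)V_-=WV_-$. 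Assembling the four blocks, $X=\begin{bmatrix}P_A&W\\W^{\HH}&P_D\end{bmatrix}\diag(V_+,V_-)$, which is precisely \eqref{eq:lm:polar-decomposition:+}.

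I expect the only genuine subtlety to lie in the ``only if'' direction: one must realize that the companion relation $DD^{\HH}=I+CC^{\HH}$, which is what lets $P_D$ be identified with $(I_{n_-}+W^{\HH}W)^{1/2}$, is not among the identities coming directly from $X^{\HH}J_nX=J_n$ but only from the derived identity $XJ_nX^{\HH}=J_n$; and one must see that $W$ should be read off as $V_+C^{\HH}$ from the polar factor of the $(1,1)$ block together with the $(2,1)$ block, after which the remaining block $B$ is forced to equal $WV_-$ — this last step being exactly where the intertwining identity is indispensable. Everything else is routine block arithmetic together with the standard facts that the polar factor of an invertible matrix is unitary and that $I_{n_+}+WW^{\HH}\succeq I\succ0$ automatically since $WW^{\HH}\succeq0$.
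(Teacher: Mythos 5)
Your proof is correct. Note, however, that the paper does not actually prove this lemma: it is cited as a known result (Example 6.3 of Veselić 2011, with additional references to earlier sources), so there is no paper proof to compare against --- you have supplied the argument the paper chose to omit. Your argument is the standard one for the hyperbolic polar decomposition: the intertwining identity $Wf(W^{\HH}W)=f(WW^{\HH})W$ (here applied with $f(t)=\sqrt{1+t}$) does the work in both directions, the ``if'' direction being a one-line block multiplication and the ``only if'' direction proceeding by taking left polar decompositions $A=P_AV_+$, $D=P_DV_-$ of the diagonal blocks, setting $W=V_+C^{\HH}$, and checking that the four block identities coming from $X^{\HH}J_nX=J_n$ and the derived relation $XJ_nX^{\HH}=J_n$ force the remaining blocks into the asserted shape. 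All the individual steps --- invertibility of $A$ and $D$ from $A^{\HH}A=I+C^{\HH}C\succeq I$ and $D^{\HH}D=I+B^{\HH}B\succeq I$, the identifications $P_A^2=I+WW^{\HH}$ and $P_D^2=I+W^{\HH}W$, and the final computation $B=(A^{\HH})^{-1}C^{\HH}D=P_A^{-1}WP_DV_-=WV_-$ using $WP_D=P_AW$ --- check out. You correctly flag the one genuine subtlety, namely that $DD^{\HH}=I+CC^{\HH}$ is not among the identities from $X^{\HH}J_nX=J_n$ alone but requires the companion relation $XJ_nX^{\HH}=J_n$.
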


Lemma~\ref{lm:complement-basis:+} can also be found in many classical monographs, e.g., \cite{malc:1963,golr:2005}, and
Lemma~\ref{lm:polar-decomposition:+} can be found in \cite{vese:1993,kove:1995}, where \eqref{eq:lm:polar-decomposition:+} is called a (hyperbolic) {\em polar decomposition\/} of $X$. Now we are ready to present our proof.

\begin{proof}[Proof of Theorem~\ref{thm:main-SPD:min:sep:+}]
First we deal with the case when \textbf{the matrix $B$ is singular}, i.e., $n_0>0$ in \eqref{eq:eigen-decomp}.
Partition $Y=\kbordermatrix{ &\sss k\\
	\sss r & Y_r \\
    \sss n_0 & Y_{\infty} }$,
and then
\begin{align}
\inf_{Y^{\HH}J_n Y =I_k}\trace(\Omega  Y^{\HH}\Lambda Y)
	&= \inf_{Y_r^{\HH}J_r Y_r =I_k}\left[ \trace(\Omega  Y_r^{\HH}\Lambda_r Y_r)
                         +\trace(\Omega  Y_{\infty}^{\HH}\Lambda_{\infty} Y_{\infty}) \right] \nonumber	\\
	&= \inf_{Y_r^{\HH}J_r Y_r =I_k}\trace(\Omega  Y_r^{\HH}\Lambda_r Y_r)
        +\inf_{Y_{\infty}}\trace(\Omega  Y_{\infty}^{\HH}\Lambda_{\infty} Y_{\infty}).\label{eq:B=singular:+}
\end{align}
We will examine the two terms in \eqref{eq:B=singular:+} separately.
Constraint $Y^{\HH}J_nY=I_k$ yields $Y_r^{\HH}J_rY_r=I_k$, leaving $Y_{\infty}\in\bbC^{n_0\times k}$ arbitrary.
Restricting $Y_{\infty}$ to a rank-1 matrix $xy^{\HH}$, we find
\begin{align*}
\inf_{Y_{\infty}}\trace(\Omega  Y_{\infty}^{\HH}\Lambda_{\infty} Y_{\infty})
	&\le \inf_{\rank(Y_{\infty})\le 1}\trace(\Omega  Y_{\infty}^{\HH}\Lambda_{\infty} Y_{\infty}) 	\\
    &= \inf_{y,x}\trace(\Omega  x y^{\HH}\Lambda_{\infty} y x^{\HH}) 	\\
	&= \inf_{y,x} (x^{\HH}\Omega  x) (y^{\HH}\Lambda_{\infty} y).
\end{align*}
There are three cases.
\begin{enumerate}
\item $\Omega \not\succeq 0$ and $\Lambda_{\infty}\ne0$: we have
		$\inf_{y,x} (x^{\HH}\Omega  x) (y^{\HH}\Lambda_{\infty} y)=-\infty$, which leads to that the second
      infimum in \eqref{eq:B=singular:+} is $-\infty$.
\item $\Omega \not\succeq 0$ and $\Lambda_{\infty}=0$: we have
		$Y_{\infty}^{\HH}\Lambda_{\infty} Y_{\infty}=0$, which leads to that the second
      infimum in \eqref{eq:B=singular:+} is $0$. But our later proof for
      nonsingular $B$ shows that for the case the first infimum in \eqref{eq:B=singular:+} is $-\infty$.
\item $\Omega \succeq 0$: we have $Y_{\infty}^{\HH}\Lambda_{\infty} Y_{\infty}\succeq0$, and
      $\trace(\Omega  Y_{\infty}^{\HH}\Lambda_{\infty} Y_{\infty})\ge0$ and
          $\trace(\Omega  Y_{\infty}^{\HH}\Lambda_{\infty} Y_{\infty})=0$ for $Y_{\infty}=0$, which leads to that
      the second       infimum in \eqref{eq:B=singular:+} is $0$.
\end{enumerate}
The first infimum in \eqref{eq:B=singular:+}: $\inf_{Y_r^{\HH}J_r Y_r =I_k}\trace(\Omega  Y_r^{\HH}\Lambda_r Y_r)$,
falls into the case when \textbf{the matrix $B$ is nonsingular}, which we are about to investigate.

Suppose now that $B$ is nonsingular, i.e., $n_0=0$ in \eqref{eq:eigen-decomp}.

Consider first  that $m_0=0$, namely \textbf{the pencil $A-\lambda B$ is also diagonalizable}.
Then $J_n=\diag(I_{n_+},-I_{n_-})$.
Since $Y^{\HH}J_nY=I_k$,
by Lemma~\ref{lm:complement-basis:+} we can complement $Y$ to $\wtd Y=\begin{bmatrix}
	Y & Y_c
\end{bmatrix}\in\bbC^{n\times n}$ such that
$\wtd Y^{\HH}J_n\wtd Y=J_n $.
By 
Lemma~\ref{lm:polar-decomposition:+}, $\wtd Y$ has a hyperbolic polar decomposition
\begin{equation}\label{eq:main-SPD:min:sep:+:pf-1}
	\wtd Y=\begin{bmatrix}
		(I_{n_+}+\wtd\Sigma \wtd\Sigma ^{\HH})^{1/2} & \wtd\Sigma \\
		\wtd\Sigma ^{\HH} & (I_{n_-}+\wtd\Sigma ^{\HH}\wtd\Sigma )^{1/2}\\
		\end{bmatrix}\begin{bmatrix}
		\wtd V_+ & \\ &\wtd V_-
	\end{bmatrix},
\end{equation}
where $\wtd V_+\in \bbC^{n_+\times n_+}$ and $\wtd V_-\in \bbC^{n_-\times n_-}$ are unitary,
and $\wtd\Sigma\in\bbC^{n_+\times n_-}$.
Let $\wtd\Sigma =W_+\Sigma W_-^{\HH}$ be the singular value decomposition of $\wtd\Sigma$,
where
$$
\Sigma = \begin{bmatrix}
	\Sigma_0 \\0\\
	\end{bmatrix}\,\,\mbox{if $n_+\ge n_-$,\quad or}\,\,
\Sigma = \begin{bmatrix}
	0 &\Sigma_0
\end{bmatrix}\,\,\mbox{if $n_+< n_-$}.
$$
Hence plug $\wtd\Sigma =W_+\Sigma W_-^{\HH}$ into \eqref{eq:main-SPD:min:sep:+:pf-1} to turn
$\wtd Y=WS V^{\HH}$, where
\begin{subequations}\label{eq:main-SPD:min:sep:+:pf-2}
\begin{equation}\label{eq:main-SPD:min:sep:+:pf-2a}
W=\kbordermatrix{ &\sss n_+ &\sss n_- \\
		\sss n_+ & W_+ & \\
        \sss n_- &     & W_-		},\quad
V=\kbordermatrix{ &\sss n_+ &\sss n_- \\
		\sss n_+ & V_+ & \\
        \sss n_- &      &V_-}
		:=\kbordermatrix{ &\sss n_+ &\sss n_- \\
		\sss n_+ & \wtd V_+^{\HH}W_+ & \\
        \sss n_- & &\wtd V_-^{\HH}W_-},
\end{equation}
and
\begin{align}
S&=\begin{bmatrix}
		(I_{n_+}+\Sigma\Sigma^{\HH})^{1/2} & \Sigma\\
		\Sigma^{\HH} & (I_{n_-}+\Sigma^{\HH}\Sigma)^{1/2}\\
	\end{bmatrix} \nonumber \\
 &=\begin{bmatrix}
		(I+\Sigma_0^2)^{1/2} & 0 & \Sigma_0\\
		0 & I_{|n_+-n_-|} & 0 \\
		\Sigma_0 & 0 &(I+\Sigma_0^2)^{1/2}\\
	\end{bmatrix}	. \label{eq:main-SPD:min:sep:+:pf-2b}
\end{align}
\end{subequations}
Noticing $Y=\wtd YI_{n;k}$ where $I_{n;k}=\begin{bmatrix}
	I_k\\ 0
\end{bmatrix}\in\bbC^{n\times k}$, we have from \eqref{eq:min-trace-Herm2':+}
\begin{align}
\inf_{Y^{\HH}J_n Y =I_k}\trace(\Omega  Y^{\HH}\Lambda Y)
	&= \inf_{\wtd Y^{\HH}J_n \wtd Y=J_n }\trace(\Omega  I_{n;k}^{\HH}\wtd Y^{\HH}\Lambda \wtd YI_{n;k}) \nonumber \\
	&= \inf_{\Sigma_0\succeq0~\text{diagonal}\atop V_+,V_-,W_+,W_-~\text{unitary}}\trace(I_{n;k}\Omega I_{n;k}^{\HH}VSW^{\HH}\Lambda WSV^{\HH}) \nonumber \\
    &=\inf_{\Sigma_0\succeq0~\text{diagonal}\atop V_+,V_-,W_+,W_-~\text{unitary}}\trace(\wtd \Omega_VS\Lambda_W S), \label{eq:min-trace-Herm2'':+}
\end{align}
where $\wtd\Omega_V =V^{\HH}I_{n;k}\Omega I_{n;k}^{\HH}V$ and $\Lambda_W=W^{\HH}\Lambda W$.
Use \eqref{eq:main-SPD:min:sep:+:pf-2a} to see
$$
\wtd\Omega_V=\kbordermatrix{ & \sss n_+ & \sss n_- \\
		\sss n_+ & \wtd\Omega_{+,V}  & \\
	\sss n_- &  & 0}, \,\,
\Lambda_W=
\kbordermatrix{ & \sss n_+ & \sss n_- \\
	\sss n_+ & \Lambda_{+,W} & \\
	\sss n_- & & -\Lambda_{-,W}},
$$
where
$
\wtd \Omega_+ =\kbordermatrix{ & \sss k & \sss n_+-k \\
	\sss k & \Omega  & \\
	\sss n_+-k &  & 0}$,
$\wtd\Omega_{+,V}=V_+^{\HH}\wtd \Omega_+V_+$, $\Lambda_{+,W}=W_+^{\HH}\Lambda_+ W_+$, and $\Lambda_{-,W}=W_-^{\HH}\Lambda_- W_-$.
As a result, $\trace(\wtd \Omega_VS\Lambda_W S)$ can be given by
\begin{align*}
&\trace(\begin{bmatrix}
		\wtd \Omega_{+,V} & \\
		& 0 \\
	\end{bmatrix}\begin{bsmallmatrix}
		(I_{n_+}+\Sigma\Sigma^{\HH})^{1/2} & \Sigma\\
		\Sigma^{\HH} & (I_{n_-}+\Sigma^{\HH}\Sigma)^{1/2}\\
	\end{bsmallmatrix}\begin{bmatrix}
	\Lambda_{+,W} & \\
	& \Lambda_{-,W} \\
	\end{bmatrix}\begin{bsmallmatrix}
		(I_{n_+}+\Sigma\Sigma^{\HH})^{1/2} & \Sigma\\
		\Sigma^{\HH} & (I_{n_-}+\Sigma^{\HH}\Sigma)^{1/2}\\
	\end{bsmallmatrix})
	\\&=\trace(\begin{bsmallmatrix}
		\wtd \Omega_{+,V}(I_{n_+}+\Sigma\Sigma^{\HH})^{1/2} & \wtd \Omega_{+,V}\Sigma\\
		0 & 0\\
	\end{bsmallmatrix}\begin{bsmallmatrix}
		\Lambda_{+,W}(I_{n_+}+\Sigma\Sigma^{\HH})^{1/2} & \Lambda_{+,W}\Sigma\\
		\Lambda_{-,W}\Sigma^{\HH} & \Lambda_{-,W}(I_{n_-}+\Sigma^{\HH}\Sigma)^{1/2}\\
	\end{bsmallmatrix})
	\\&=\trace(
	\wtd \Omega_{+,V}\big[(I_{n_+}+\Sigma\Sigma^{\HH})^{1/2}\Lambda_{+,W}(I_{n_+}+\Sigma\Sigma^{\HH})^{1/2}
            - \Sigma\Lambda_{-,W}\Sigma^{\HH}\big]
		).
\end{align*}
The last infimum in \eqref{eq:min-trace-Herm2'':+} becomes
\begin{multline}\label{eq:main-SPD:min:sep:+:pf-3}
    \inf_{\Sigma_0\succeq0~\text{diagonal}\atop V_+,V_-,W_+,W_-~\text{unitary}}\trace(\wtd \Omega_VS\Lambda_W S)
	\\
  =\inf_{\Sigma_0\succeq0~\text{diagonal}\atop V_+,W_+,W_-~\text{unitary}}\trace(
	\wtd \Omega_{+,V}[(I_{n_+}+\Sigma\Sigma^{\HH})^{1/2}\Lambda_{+,W}(I_{n_+}+\Sigma\Sigma^{\HH})^{1/2}
              - \Sigma\Lambda_{-,W}\Sigma^{\HH}]
		)
		.
\end{multline}
This infimum is $-\infty$ if $\Omega \not\succeq0$.
In fact,
suppose $\wtd \Omega_+ x_+=\omega_k x_+$ where $\omega_k<0$, and $x_+$ is a unit eigenvector.
Construct
$
\what V_+=\begin{bmatrix}
	x_+ & V_{+,c}
	\end{bmatrix}\in\bbC^{n_+\times n_+}$ that is
unitary.
Thus, upon restrictions $V_+=\what V_+,\,W_+=I,\,W_-=I,\,\Sigma_0=\diag(\sigma,0,\dots,0)$, we have by
\eqref{eq:main-SPD:min:sep:+:pf-3}
\begin{align*}
\inf_{\Sigma_0\succeq0~\text{diagonal}\atop V_+,V_-,W_+,W_-~\text{unitary}}\trace(\wtd \Omega_VS\Lambda_W S)
&\le \inf_{\sigma}\trace(
	\what V_+^{\HH} \wtd\Omega_+ \what V_+[
\Lambda_++\sigma^2 (\lambda_{n_+}^+-\lambda_{n_-}^-)e_1e_1^{\HH}
	])
	\\&= \inf_{\sigma}\sigma^2 (\lambda_{n_+}^+-\lambda_{n_-}^-)\trace( \what V_+^{\HH}\wtd \Omega _+\what V_+ e_1e_1^{\HH})
	+\trace( \what V_+^{\HH}\wtd \Omega _+\what V_+ \Lambda_+)
	\\&= \inf_{\sigma}\sigma^2 (\lambda_{n_+}^+-\lambda_{n_-}^-)(e_1^{\HH} \what V_+^{\HH}\wtd \Omega _+\what V_+ e_1)
	+\trace( \what V_+^{\HH}\wtd \Omega _+\what V_+ \Lambda_+)
	\\&= \inf_{\sigma}\sigma^2 (\lambda_{n_+}^+-\lambda_{n_-}^-)(x_+^{\HH} \wtd \Omega _+x_+)
	+\trace( \what V_+^{\HH}\wtd \Omega _+\what V_+ \Lambda_+)
	\\&= \inf_{\sigma}\sigma^2 (\lambda_{n_+}^+-\lambda_{n_-}^-)\omega_k
	+\trace( \what V_+^{\HH}\wtd \Omega _+\what V_+ \Lambda_+)
	\\&=-\infty,
\end{align*}
as long as $\Lambda_+\ne0$ or $\Lambda_-\ne0$, which is equivalent to $A\ne 0$, where $e_1$ is the first column of the identity matrix.

So far, we have shown that if $\Omega \not\succeq0,A\ne 0$, the infimum is $-\infty$, for any positive semi-definite pencil $A-\lambda B$ with $B$ genuinely indefinite, except the case $A-\lambda B$ is not diagonalizable, to which we will return.

In what follows, suppose that $\Omega \succeq0$.
Then
\begin{align}
&\inf_{\Sigma_0\succeq0~\text{diagonal}\atop V_+,V_-,W_+,W_-~\text{unitary}}
	\trace(\wtd \Omega _{+,V}[(I+\Sigma \Sigma^{\HH})^{1/2}\Lambda_{+,W}(I+\Sigma \Sigma^{\HH})^{1/2}-\Sigma \Lambda_{-,W}\Sigma^{\HH}]) \nonumber \\
&=\inf_{V_{\pm},\,W_{\pm}~\text{unitary}}\inf_{\Sigma_0\succeq0~\text{diagonal}}
	\trace(\wtd \Omega _{+,V}[(I+\Sigma \Sigma^{\HH})^{1/2}\Lambda_{+,W}(I+\Sigma \Sigma^{\HH})^{1/2}
           -\Sigma \Lambda_{-,W}\Sigma^{\HH}])
	.  \label{eq:main-SPD:min:sep:+:pf-4}
\end{align}
Note that $\wtd \Omega _{+,V},\Lambda_{+,W},-\Lambda_{-,W}$ are positive semi-definite.
In the following, we will show that the ``infimum'' in \eqref{eq:main-SPD:min:sep:+:pf-4} is
$\sum_{i=1}^{k}\omega_i\lambda_i^+$ and is attained at $\Sigma=0$.

Firstly, since $\wtd \Omega _{+,V}\succeq 0$ and $\Sigma(-\Lambda_{-,W})\Sigma^{\HH}\succeq 0$,
we have by Theorem~\ref{thm:main-SPD:min}
\begin{equation}\label{eq:main-SPD:min:sep:+:pf-4a}
\trace(-\wtd \Omega _{+,V}\Sigma\Lambda_{-,W}\Sigma^{\HH})
    =\trace(\Sigma(-\Lambda_{-,W})\Sigma^{\HH})\ge 0.
\end{equation}
Secondly, we claim that
\begin{equation}\label{eq:main-SPD:min:sep:+:pf-4b}
\trace\big(\wtd \Omega _{+,V}(I+\Sigma \Sigma^{\HH})^{1/2}\Lambda_{+,W}(I+\Sigma \Sigma^{\HH})^{1/2}\big)
   \ge\sum_{i=1}^{k}\omega_i\lambda_i^+.
\end{equation}
Denote the eigenvalues of $(I+\Sigma \Sigma^{\HH})^{1/2}\Lambda_{+,W}(I+\Sigma \Sigma^{\HH})^{1/2}$ by
$\delta_1\le\delta_2\le\cdots\le\delta_{n_+}$. By Ostrowski's theorem \cite[p.283]{hojo:2013}, we known
\begin{equation}\label{eq:main-SPD:min:sep:+:pf-4c}
\lambda_i^+\le\delta_i\le (1+\|\Sigma\|_2^2)\lambda_i^+\quad\mbox{for $1\le i\le n_+$},
\end{equation}
where $\|\Sigma\|_2$ is the spectral norm of $\Sigma$.
Let $I_{n_+;k}=\begin{bmatrix}
	I_k\\ 0
\end{bmatrix}\in\bbC^{n_+\times k}$. We have
\begin{align}
\trace\big(\wtd \Omega _{+,V}&(I+\Sigma \Sigma^{\HH})^{1/2}\Lambda_{+,W}(I+\Sigma \Sigma^{\HH})^{1/2}\big)\nonumber \\
  &=\trace\big(V_+^{\HH}I_{n_+;k}\Omega I_{n_+;k}^{\HH}V_+(I+\Sigma \Sigma^{\HH})^{1/2}\Lambda_{+,W}(I+\Sigma \Sigma^{\HH})^{1/2}\big)\nonumber \\
  &=\trace\big(\Omega (V_+^{\HH}I_{n_+;k})^{\HH}(I+\Sigma \Sigma^{\HH})^{1/2}\Lambda_{+,W}(I+\Sigma \Sigma^{\HH})^{1/2}(V_+^{\HH}I_{n_+;k})\big)\nonumber \\
  &\ge\sum_{i=1}^{k}\omega_i\delta_i \qquad\mbox{(by Theorem~\ref{thm:main-SPD:min})}\nonumber \\
  &\ge\sum_{i=1}^{k}\omega_i\lambda_i^+, \label{eq:main-SPD:min:sep:+:pf-4d}
\end{align}
where we have used \eqref{eq:main-SPD:min:sep:+:pf-4c} in the last step. This is \eqref{eq:main-SPD:min:sep:+:pf-4b}.
It is not hard to see that the equality in \eqref{eq:main-SPD:min:sep:+:pf-4d} is attained at $\Sigma=0$ and
appropriately chosen $V_+$ and $W_+$.
Combining \eqref{eq:main-SPD:min:sep:+:pf-3}, \eqref{eq:main-SPD:min:sep:+:pf-4},
\eqref{eq:main-SPD:min:sep:+:pf-4a}, and \eqref{eq:main-SPD:min:sep:+:pf-4b}
completes the proof of the theorem for the case when
 $A-\lambda B$ is diagonalizable.

Consider now that $m_0>0$, namely \textbf{the pencil $A-\lambda B$ is not diagonalizable}.
We perturb $A-\lambda B$ to  $(A+\varepsilon E)-\lambda B$ with $\varepsilon>0$, where
\[
	E=U^{-\HH}\diag(0,0,E_{\rm b},0)U^{-1},\quad E_{\rm b}=\diag(\underbrace{E_0,\dots,E_0}_{m_0}),\quad E_0=\begin{bmatrix}
		1&0\\0&0
	\end{bmatrix}.
\]
Clearly $(A+\varepsilon E)\succeq0$ and the pencil is diagonalizable. Letting $\varepsilon\to 0^+$ leads to the desired result, based on the case for diagonalizable $A-\lambda B$.
\end{proof}

\begin{proof}[Proof of Corollary~\ref{cor:main-SPD:min:sep}]
Partition $X=\begin{bmatrix}
	X_+ & X_-
\end{bmatrix}$, and let $D_{\pm}=Q_{\pm}\Omega_{\pm}Q_{\pm}^{\HH}$ be the eigen-decompositions of $D_+$ and $D_-$, respectively.
First consider the case that $A-\lambda B$ is diagonalizable.
We have
\begin{align*}
\min_{X^{\HH}BX=J_k}\trace(DX^{\HH}AX)
&=\min_{X_{\pm}^{\HH}BX_{\pm}=\pm I_{k_{\pm}}\atop X_+^{\HH}BX_-=0}\trace(D_+X_+^{\HH}AX_++D_-X_-^{\HH}AX_-) \\
&\ge\min_{X_+^{\HH}BX_+=I_{k_+}}\trace(D_+X_+^{\HH}AX_+)+
     \min_{X_-^{\HH}BX_-=-I_{k_-}}\trace(D_-X_-^{\HH}AX_-) \\
  &=\sum_{i=1}^{k_+}\omega_i^+\lambda_i^+-\sum_{i=1}^{k_-}\omega_i^-\lambda_i^-,
\end{align*}
of which the last equality holds by making the columns of $X_{\pm}Q_{\pm}$ be the eigenvectors of $A-\lambda B$ associated with its eigenvalues $\lambda_i^{\pm}$ for $1\le i\le k_{\pm}$, respectively.
This proves \eqref{eq:main-SPD:min:sep}.

For the case that $A-\lambda B$ is not diagonalizable, \eqref{eq:main-SPD:min:sep} also holds, by using the same technique at the end of the proof of Theorem~\ref{thm:main-SPD:min:sep:+} above.
\end{proof}

\section{Conclusion}\label{sec:concl}
Previously, the classical Ky Fan's trace minimization principle on
$\min_{X^{\HH}X=I_k}\trace(X^{\HH}AX)$ for a Hermitian matrix $A$ has been extended
to about
\begin{align*}
&\min_{X^{\HH}BX=I_k}\trace(X^{\HH}AX)\quad\mbox{for positive definite $B$, or more generally} \\
&\inf_{X^{\HH}BX=J_k}\trace(X^{\HH}AX)\quad\mbox{for genuinely indefinite $B$},
\end{align*}
where $J_k$ is diagonal with diagonal entries $\pm 1$.
The extension for a positive definite $B$ is rather straightforward, but quite complicated
when $B$ is genuinely indefinite  \cite{kove:1995,lilb:2013}. In fact, the infimum can be $-\infty$ for the last case.

Our extensions in this paper
are along the line of the Brockett cost function:
$\trace(DX^{\HH}AX)$ in $X$ satisfying $X^{\HH}X=I_k$, when $D$ is Hermitian and positive
semi-definite. Specifically, we present elegant analytic solutions, in terms of
eigenvalues and eigenvectors of matrix pencil $A-\lambda B$, to
\begin{align}
&\min_{X^{\HH}BX=I_k}\trace(DX^{\HH}AX)\quad\mbox{for positive definite $B$}, \\
&\inf_{X^{\HH}BX=J_k}\trace(DX^{\HH}AX)\quad\mbox{for genuinely indefinite $B$}, \label{eq:XBX=J}
\end{align}
where $D$ is no longer assumed to be positive semi-definite. It is shown that the infimum in \eqref{eq:XBX=J} is finite if and only if $D$ is positive semi-definite.
Our analytic solutions are concise and our algebraic technique compares favorably to previously laborious effort for the case $B=I$ via the usual optimization technique \cite{lisw:2019}.

{\small
\def\noopsort#1{}\def\l{\char32l}\def\v#1{{\accent20 #1}}
  \let\^^_=\v\def\hbk{hardback}\def\pbk{paperback}

}

\end{document}